\theoremstyle{plain}
\newtheorem*{thm*}{Theorem}
\newtheorem{thm}{Theorem}
\Crefname{thm}{Theorem}{Theorems}
\newtheorem*{lem*}{Lemma}
\newtheorem{lem}[thm]{Lemma}
\Crefname{lem}{Lemma}{Lemmas}
\newtheorem*{claim*}{Claim}
\crefname{claim}{Claim}{Claims}
\Crefname{claim}{Claim}{Claims}
\newtheorem{prop}[thm]{Proposition}
\Crefname{prop}{Proposition}{Propositions}
\crefname{cor}{Corollary}{Corollaries}
\crefname{conj}{Conjecture}{Conjectures}
\Crefname{qn}{Question}{Questions}
\Crefname{obs}{Observation}{Observations}
\Crefname{ex}{Example}{Examples}
\theoremstyle{definition}
\Crefname{prob}{Problem}{Problems}
\Crefname{defn}{Definition}{Definitions}
\theoremstyle{remark}
\renewenvironment{proof}[1][]{\begin{trivlist}
\item[\hspace{\labelsep}{\bf\noindent Proof#1.\/}] }{\qed\end{trivlist}}
\newcommand{\remove}[1]{}
\newcommand{\ceil}[1]{
    \left\lceil #1 \right\rceil
}
\newcommand{\floor}[1]{
    \left\lfloor #1 \right\rfloor
}
\newcommand{\lramsey}[2]{
\unskip$\left({#1},{#2}\right)$-locally Ramsey\xspace
}
\renewcommand{\P}{\mathbb{P}}
\newcommand{\G}{\mathcal{G}}
\title{\vspace{-1 cm}
Large cliques and independent sets all over the place}
\date{}
\author{Noga Alon\thanks{Department of Mathematics, Princeton University, Princeton, USA and Schools of Mathematics
and Computer Science, Tel Aviv University, Tel Aviv. Email: 
\href{mailto:nogaa@tau.ac.il} {\nolinkurl{nogaa@tau.ac.il}}. 
Research supported in part by NSF grant DMS-1855464, ISF grant 281/17, BSF grant 2018267 and the
Simons Foundation.}
\and
Matija Buci\'c\thanks{Department of Mathematics, ETH, Z\"urich, Switzerland. Email: \href{mailto:matija.bucic@math.ethz.ch} {\nolinkurl{matija.bucic@math.ethz.ch}}.}
\and
Benny Sudakov\thanks{Department of Mathematics, ETH, Z\"urich, Switzerland. Email:
\href{mailto:benjamin.sudakov@math.ethz.ch} {\nolinkurl{benjamin.sudakov@math.ethz.ch}}.
Research supported in part by SNSF grant 200021-175573.}
}
\begin{document}

\maketitle
\vspace{-0.8 cm}
\begin{abstract}
    We study the following question raised by Erd\H{o}s and Hajnal in the early 90's. Over all $n$-vertex graphs $G$ what is the smallest possible value of $m$ for which any $m$ vertices of $G$ contain both a clique and an independent set of size $\log n$? We construct examples showing that $m$ is at most $2^{2^{(\log\log n)^{1/2+o(1)}}}$ obtaining a twofold sub-polynomial improvement over the upper bound of about $\sqrt{n}$ coming from the natural guess, the random graph. Our (probabilistic) construction gives rise to new examples of Ramsey graphs, which while having no very large homogenous subsets contain both cliques and independent sets of size $\log n$ in any small subset of vertices. This is very far from being true in random graphs. Our proofs are based on an interplay between taking lexicographic products and using randomness.

\end{abstract}

\section{Introduction}


Ramsey theory refers to a large body of deep results, which roughly say that any sufficiently large structure is guaranteed to have a large well-organised substructure. Its inception dates back to 1929 and the celebrated theorem of Ramsey \cite{ramsey1929problem} which states that any sufficiently large graph must contain a clique or an independent set of arbitrarily large size. In terms of quantitative results in 1935 Erd\H{o}s and Szekeres \cite{E-S} showed that any graph on $n$ vertices contains a clique or an independent set of size $0.5\log n$. On the other hand in what was one of the first applications of the now indispensable probabilistic method Erd\H{o}s \cite{ramsey-random} has shown that in a random graph $\G(n,1/2)$ w.h.p. there are no cliques or independent sets of size $2 \log n.$ Despite considerable effort \cite{explicit-ramsey-1,explicit-ramsey-2,explicit-ramsey-3,frankl-wilson} there are still no known non-probabilistic constructions which match the random graph.

We say a graph is \textit{$k$-Ramsey} if it contains neither a clique nor an independent set of size $k$. In general an $n$-vertex graph is said to be a \textit{Ramsey graph} if it is $k$-Ramsey for some $k$ ``close'' to $\log n.$ Over the years there has been a wide body of work studying properties of Ramsey graphs. In particular, based on the apparent difficulty of finding non-probabilistic Ramsey graphs, it is widely believed that with an appropriate definition of ``close'' any Ramsey graph must be random-like. While there is a vast number of results (see \cite{random-ramsey-property-1,random-ramsey-property-2,random-ramsey-property-3,random-ramsey-property-4,random-ramsey-property-5,random-ramsey-property-6,random-ramsey-property-7, random-ramsey-property-8} and references within) showing that indeed Ramsey graphs need to satisfy, to an extent, various properties usually associated with random graphs, our understanding of Ramsey graphs is still far from sufficient to consider this claim in any way settled.

Given an integer $k$ let $G$ be a $k$-Ramsey graph with the largest number of vertices. Observe that $G$ must contain \textit{both} a clique and an independent set of size $k-1,$ as otherwise we can add a new vertex joined to all or none of the vertices of $G$ to find a larger $k$-Ramsey graph. This shows that if we have a good Ramsey graph the largest clique and largest independent set should be of similar size. For example, if we consider the random graph, which is the best known Ramsey graph, it will with high probability contain both a clique and an independent set of size a $1$ or $2$ less than the largest $k$ for which it is $k$-Ramsey. Furthermore, Ramsey graphs satisfy a similar property locally as well. Given a $k$-Ramsey graph, since it has no clique or independent set of size $k$, we know that any subset consisting of $R(k,\ell)$\footnote{$R(k,\ell)$ denotes the off-diagonal Ramsey number, defined as the minimum number of vertices in a graph needed to guarantee there is either a clique of order $k$ or an independent set of order $\ell$.} vertices contains \textit{both} a clique and an independent set of size $\ell$.

The so called local-global principle, stating that one can obtain global understanding of a structure from having a good understanding of its local properties, or vice versa, has been ubiquitous in many areas of mathematics and beyond for many years \cite{local-global-1, local-global-2, local-global-3, local-global-4, local-global-5}. Keeping this in mind the following problem of Erd\H{o}s and Hajnal \cite{erdos-problems} seems to be very relevant to understanding Ramsey graphs. Given a $k$ (which might be a function of $n$) and an $n$-vertex graph $G$ they ask what is the smallest $m$ for which any $m$ vertex subset of $G$ contains both a clique and an independent set of size $k$? We denote the answer by $m_G(k)$ and say that a graph is \textit{\lramsey{m}{k}} if $m \ge m_G(k)$. To see the relation to Ramsey graphs first observe that being \lramsey{m}{2} is equivalent to being $m$-Ramsey. Secondly, more interestingly if we can find an $n\ge 3m$ vertex \lramsey{m}{k+1} graph it can contain at most $k-1$ vertex disjoint cliques of size $m/k,$ as otherwise they would give us a set of $m$ vertices in which there is no independent set of size $k+1$. The same clearly applies for independent sets. So if we remove a maximal collection of such cliques and independent sets we are left with a graph on at least $n/3$ vertices which is $m/k$-Ramsey in addition to still being \lramsey{m}{k+1}. 

This means that understanding the behaviour of $m_G(k)$ very well could lead us to better understanding of Ramsey graphs, as well as interesting new examples of Ramsey graphs. Let us first consider what happens with $m_G(k)$ for the random graph $G\sim \G(n,1/2).$ If $k$ is small compared to $n$ we have that w.h.p.\ $m_G(k)= \Theta(k \log n)$ (see Section \ref{sec:small-k}) which as we will see, and as one might expect since for $k=2$ this is the standard Ramsey problem, is actually smallest possible among all graphs. On the other hand we also have by Erd\H{o}s's results \cite{ramsey-random} that $m_G(k)\ge 2^{k/2}$ so as $k$ becomes larger than $\log \log n$ the bound deteriorates quickly. For example, one needs at least $\sqrt{n}$ size sets to guarantee to be able to find both cliques and independent sets of size $\log n$. 

A natural question is whether one can do better.
In fact, Erd\H{o}s \cite{erdos-problems} singled out the case of $k=\log n$ and asked if such graphs exist with $m_G(k)=(\log n)^3.$ If the answer were positive this would give rise to $(\log n)^2$-Ramsey graphs which are very different than $\G(n,1/2)$, since they would still be \lramsey{(\log n)^3}{\log n} which is very far from being true in the random graph. This question remains open. However, the first and the third author \cite{local-independence-numbers} show that $m_G(\log n) \ge \Omega ((\log n )^3/\log \log n),$ which perhaps validates Erd\H{o}s' intuition behind asking the question with the parameters he chose. The authors in fact essentially resolve an analogous local Tur\'an type problem which they use to obtain the above-mentioned bound. This problem, for various choices of parameters was also studied in \cite{erdos-problems, LIN-linial, LIN-us, LIN-kriv, LIN-kost}.

On the other hand, in terms of upper bounds nothing better than the one mentioned above, coming from random graphs, namely that there is a graph $G$ for which $m_G(\log n) \le O(\sqrt{n}),$ was known, leaving the possibility that no significantly different Ramsey graphs arise this way. Perhaps surprisingly our main result shows this is not the case, giving a twofold sub-polynomial improvement over the above bound.
\begin{thm}\label{thm:intro-log-n}
There exists an $n$-vertex graph $G$ for which $$m_G(\log n) \le 2^{2^{(\log \log n)^{1/2+o(1)}}}.$$ 
\end{thm}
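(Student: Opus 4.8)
The natural starting point is the random graph, which only gives $m_G(\log n) \le O(\sqrt n)$, so the whole game is to beat this by a sub-polynomial factor, and the hint in the abstract is that this should be done via an interplay of \emph{lexicographic products} and randomness. The plan is therefore to build $G$ as a lexicographic product $G = G_1[G_2[\cdots[G_t]\cdots]]$ of random-like "gadget" graphs $G_i$, each on a relatively small vertex set. The point of the lexicographic product is that cliques and independent sets behave multiplicatively across the layers: $\omega(H_1[H_2]) = \omega(H_1)\omega(H_2)$ and $\alpha(H_1[H_2]) = \alpha(H_1)\alpha(H_2)$. So if each $G_i$ is $k_i$-Ramsey on $n_i$ vertices, the product is roughly $(\prod k_i)$-Ramsey on $\prod n_i$ vertices; by iterating the Erd\H os--R\'enyi construction $t$ times one does not change the exponent $\log n$ versus $\sqrt n$ relationship by itself — the real gain must come from the \emph{local} statement, where the layered structure lets a moderately sized vertex set already see a full copy of a lower layer.

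The key mechanism I would try to exploit is this: in $G = G_1[\cdots[G_t]]$, any set $S$ of vertices that is "spread out" — meaning its projection to the top coordinate is large, and within each fibre its further projection is large, etc. — automatically contains a copy of each $G_i$ (or at least a large induced subgraph of it), and hence a clique and an independent set of size $\prod_i \omega(G_i)$ and $\prod_i \alpha(G_i)$ respectively. The hard part is handling sets $S$ that are \emph{not} spread out: they could be concentrated in a single fibre at some level, in which case we need to recurse on that fibre (a smaller instance of the same problem), or concentrated on few coordinates, in which case a union bound / pigeonhole argument should reduce to a lower-dimensional case. The technical heart of the proof will be setting up an induction on the number of layers $t$ (equivalently on $n$) where the inductive hypothesis controls $m_{G}(k)$ for all relevant $k$ simultaneously, and choosing the sizes $n_i$ and Ramsey parameters $k_i$ of the layers so that the recursion closes. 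A natural choice is to take $t \approx \log\log n$ layers, each $G_i$ a random graph on roughly $n^{1/t} = 2^{(\log n)/t}$ vertices, which is about $k_i$-Ramsey with $k_i \approx (\log n)/t$, so that $\prod k_i \approx ((\log n)/t)^t$ which for $t \approx (\log\log n)^{1/2}$-ish is comfortably above $\log n$, giving room to spare that can be spent on the local guarantee. Tracking the local bound through the recursion is where the doubly-exponential shape $2^{2^{(\log\log n)^{1/2+o(1)}}}$ should emerge: roughly $m$ multiplies by a factor of order $2^{k_i} \approx 2^{(\log n)/t}$-ish per layer in the worst (concentrated) case, but the spread-out case only costs a polynomial-in-$k_i$ factor per layer, and balancing the two regimes over $t$ layers yields the stated bound.

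Concretely I would proceed as follows. First, prove a clean "spreading" lemma: if $S \subseteq V(G_1[H])$ has the property that at least $m_1$ of the top-level vertices each receive at least $m_2$ vertices of $S$ in their fibre, then from a set of such size in $G_1$ (which is $(m_1,k_1)$-locally Ramsey) we extract a clique $C$ and an independent set $I$ of size $k_1$ in the top layer, and then within the $k_1$ fibres over $C$ (each containing $\ge m_2$ vertices of $S$) we recursively pull out a clique of $H$ of size $k_2$, and similarly over $I$ for the independent set; this multiplies the homogeneous sizes as desired. Second, prove the complementary "concentration" lemma via pigeonhole: if $|S| = m$ and $S$ fails to be spread, then some single fibre contains at least $m / (\text{number of heavy top-vertices})$ vertices of $S$, which by choice of parameters is still large, and we recurse into that fibre (one fewer layer). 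Third, set the parameters $n_i, k_i, m_i$ layer by layer and verify by induction on $t$ that the worst case over "spread at level $j$, concentrated below" is governed by a recursion whose solution is $m_G(\log n) \le 2^{2^{(\log\log n)^{1/2 + o(1)}}}$. The randomness enters only inside each $G_i$: we need each $G_i$ to be genuinely $k_i$-Ramsey (Erd\H os) and — this is the extra ingredient beyond pure Ramsey-ness — to have the property that \emph{every} subset of $G_i$ of size $m_i$ already contains a clique and independent set of size $k_i$, i.e.\ $G_i$ itself is $(m_i, k_i)$-locally Ramsey with $m_i$ not much larger than $\Theta(k_i \log n_i)$, which for the random graph with small $k_i$ is exactly the $\Theta(k\log n)$ bound mentioned in Section~\ref{sec:small-k}. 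The main obstacle, I expect, is the bookkeeping in the third step: one must choose the layer sizes so that neither the spread case nor the concentrated case dominates, the product $\prod k_i$ stays above $\log n$ with a little slack, and the total number of vertices stays exactly $n$ — a delicate simultaneous optimization, and getting the $(\log\log n)^{1/2}$ exponent (rather than, say, $(\log\log n)^{1}$ or $(\log\log n)^{1/3}$) right is precisely what pins down the correct number of layers.
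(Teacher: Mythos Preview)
Your plan captures exactly the mechanism behind \Cref{lem:illustration} in the paper --- the spreading/concentration dichotomy you describe is precisely the case analysis in \Cref{lem:lex-product}, and taking $G$ to be a lexicographic power of a random graph with carefully chosen parameters is the construction there. The problem is that this construction, optimised as far as it will go, only yields
\[
m_G(\log n) \;\le\; 2^{O\!\left(\sqrt{\log n\,\log\log n}\right)} \;=\; 2^{(\log n)^{1/2+o(1)}},
\]
which is a single sub-polynomial improvement over $\sqrt n$, not the doubly sub-polynomial bound $2^{2^{(\log\log n)^{1/2+o(1)}}}$ claimed in \Cref{thm:intro-log-n}. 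Your heuristic ``balancing the two regimes over $t$ layers yields the stated bound'' is where the argument breaks: when each $G_i$ is a random graph, the concentrated case costs a factor of roughly $2^{k_i}$ per layer (this is the $2^{r+8}\log n$ in \Cref{prop:random-graph}), and with $\prod k_i \ge \log n$ forced, the product of these costs is at least $2^{\sum k_i} \ge 2^{t(\log n)^{1/t}}$, which is minimised at roughly $2^{e\log\log n}$ only when $t\approx\log\log n$ --- but then the spread case deteriorates, and the true optimum sits at $\log m \approx \sqrt{\log n\,\log\log n}$. Any arrangement of lexicographic products of random graphs is itself a power of a random graph, so no reparametrisation of your scheme escapes this.

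The missing idea is \emph{scrambling}: after building a good \lramsey{m}{r} graph $H$ via products, one flips every edge and non-edge of $H$ independently with a small probability $p\approx (\log n)/r$. This barely damages the large-$r$ local Ramsey property (a clique of size $r$ in $H$ survives as a clique of size $\Omega(r/\log n)$) but dramatically improves the small-$r$ property --- the scrambled graph becomes genuinely $(r/2)$-Ramsey, i.e.\ \lramsey{r/2}{2}. It is this second property, unavailable in a pure product of random graphs, that makes the next round of lexicographic products far more efficient (the $m_2$ in \Cref{lem:lex-product} drops from $\approx\log n$ to $\approx r'$), and iterating product--scramble--product--scramble $t$ times is what drives $\log\log m$ down to $O(\sqrt{\log\log n\,\log\log r})$, giving the theorem when $r=\log n$.
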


As discussed above this gives rise to Ramsey graphs which, while being worse than the random graph, are significantly better than the classical explicit construction of Frankl and Wilson \cite{frankl-wilson} and even the recent breakthrough explicit construction of Barak, Rao, Shaltiel and Wigderson \cite{explicit-ramsey-1}. While unfortunately our construction does use randomness, it still gives rise to somewhat weaker Ramsey graphs which are very different from $\G(n,1/2)$ in the sense that they are \lramsey{2^{2^{(\log \log n)^{1/2+o(1)}}}}{\log n}.

So far we have restricted attention to the case of $k=\log n$ for simplicity and to allow for easier comparison between results. We do find examples in the general case as well.

\begin{restatable}{thm}{maingeneral}
\label{thm:intro-main-general}
For any $n \ge 4$ and $k \ge \log n$ there exists an $n$-vertex graph $G$ with $$\log \log m_G(k) \le 6\sqrt{ \log \log n \log \log k}.$$
\end{restatable}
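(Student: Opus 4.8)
The plan is to take $G$ to be an iterated lexicographic product of random graphs, with a number of levels $t$ that is optimized at the end against $n$ and $k$, and to analyze the local Ramsey function of such a product by induction on $t$.

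Two preliminary remarks streamline things. First, since $\overline{A[B]}=\overline A[\overline B]$ and $\mathcal G(N,1/2)$ has the same law as its complement, any graph built from random graphs $\mathcal G(N_i,1/2)$ by lexicographic products is self-complementary in distribution; it therefore suffices to force a clique of size $k$ in every $m$-vertex subset, the independent set following by the symmetric argument with identical parameters. Second, for the random gadget $R=\mathcal G(N,1/2)$ a union bound over the at most $N^{q}$ subsets of each size $q$ shows that with high probability $R$ has the \emph{uniform} local Ramsey property: every $q$-vertex subset of $R$ contains a clique of size at least $c\log q$, for an absolute constant $c>0$ and all $q$ down to a polylogarithmic threshold (below which the statement is vacuous and hence harmless). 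What makes the union bound work is that the lower tail of $\omega(\mathcal G(q,1/2))$ is superpolynomially small in $q$ once $q$ is polylogarithmic in $N$.

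The core is how the local clique number transforms under one lexicographic product with $R$. For $G_t$ a lexicographic power $R^{[t]}$ (or a product $R_1[\cdots[R_t]]$ of random graphs of prescribed sizes with product $n$) one proves, by induction on $t$, a bound of the shape: every $m$-vertex subset of $G_t$ contains a clique of size $\ge\psi_t(m)$. Given $S\subseteq G_t=R[G_{t-1}]$, look at how $S$ splits among the top copies: either almost all of $S$ lies in one copy, and one recurses into it with essentially the full budget; or, for a suitable threshold, many top copies carry a substantial part of $S$, so the indices of those copies form a large subset of $R$, which by the uniform property contains a large clique of $R$; placing on top of each of its vertices the clique produced inside the corresponding copy by the inductive hypothesis yields a large clique of $G_t$. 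Choosing the thresholds at each level to control the loss, the recursion unwinds to a clique of size roughly $\prod_{i=1}^{t}c\log m_i$, where the $m_i$ are the budgets that reach level $i$; and the bookkeeping of how the budget is eaten by the copies at each level produces, after taking logarithms twice, a bound on $\log\log m$ of the form (a term decreasing in $t$, governed by $\log\log n$) plus (a term increasing in $t$, governed by $\log\log k$, since meeting $\prod c\log m_i\ge k$ is cheapest when all $\log m_i$ are about $k^{1/t}$).

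Optimizing the number of levels then gives the theorem: choosing $t$ of order $\sqrt{\log\log n/\log\log k}$ balances the two contributions at order $\sqrt{\log\log n\,\log\log k}$, and after padding the construction to exactly $n$ vertices (which only helps, as the property is inherited by subgraphs) and absorbing all multiplicative slack — the constant $c$, the factors of $t$, the self-complementary doubling, the rounding — one reaches $\log\log m_G(k)\le 6\sqrt{\log\log n\,\log\log k}$. The step I expect to be the real obstacle is exactly the inductive control of $\psi_t$: a small set can be distributed across the copies of the product in a self-similar worst-case pattern engineered to suppress cliques (an adversary that always hides inside independent sets of $R$ even turns $S$ into an independent set), so one must show that once $m$ passes the right threshold \emph{every} such distribution still exhibits a $k$-clique — and must do so keeping the per-level loss small enough to actually beat $\sqrt n$. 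The two further technical points are running the argument for cliques and independent sets simultaneously and dealing with the regime of subsets too small for the gadget property to be informative.
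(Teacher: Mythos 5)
Your construction — an iterated lexicographic product $R_1[\cdots[R_t]]$ of random graphs with $t$ optimized at the end — is exactly the paper's \emph{first} construction (Section~2, \Cref{lem:illustration}), and it does not reach the claimed bound. Taking lexicographic powers of $\mathcal G(n,1/2)$ and optimizing over the gadget size and number of levels gives $\log m_G(k)=O(\sqrt{\log n\,\log k})$, hence $\log\log m_G(k)\approx \tfrac12\log\log n$; the theorem demands $\log\log m_G(k)\le 6\sqrt{\log\log n\,\log\log k}$, which is exponentially smaller for $k=\log n$. Allowing the $R_i$ to have different sizes does not help: if some factor is large the adversary can concentrate $S$ in a single copy at that level, collapsing the recursion to that one random graph and losing the multiplicative gain, so one is forced back to roughly equal factors, i.e.\ to a power of a single random graph — the paper remarks on this explicitly (``all our examples are in fact already powers of the random graph, so doing this would only provide us with higher powers of the random graph which are already considered by our argument'').

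The missing idea is the paper's ``scrambling'' step (\Cref{lem:randomize}). The lexicographic power $G^\ell$ of a random graph has a huge clique/independence number (roughly $(2\log n)^\ell\gg \log(n^\ell)$), so it is a very poor Ramsey graph; this is precisely the structure your adversary exploits by hiding $S$ inside large homogeneous sets at every level — the concern you flag at the end of your write-up. The fix is to flip each adjacency independently with a small probability $p\approx (\log |G^\ell|)/\omega(G^\ell)$. The resulting graph $G'$ is simultaneously a near-optimal Ramsey graph (no homogeneous set of size $\approx \omega(G^\ell)/2$) \emph{and} still $(m,\omega(G^\ell)/(17\log n))$-locally Ramsey, two properties no random graph on the same vertex set can have together. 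It is lexicographic powers of $G'$, iterated $t$ times (power, then scramble, then power again, \dots), together with the two-threshold form of the product lemma (\Cref{lem:lex-product}) fed both the Ramsey property and the boosted clique property of $G'$, that give \Cref{thm:main-general} with the exponent $t^{2t}(\log k)^t(\log N)^{1/t}$, and optimizing $t\approx\sqrt{\log\log N/\log\log k}$ yields the stated bound. Without scrambling, the induction stalls at the $\sqrt{\log N}$ regime.

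Two smaller points. Your ``uniform local Ramsey property'' of $\mathcal G(N,1/2)$ needs care: you want it to hold for \emph{every} clique target $r$ simultaneously, and the union bound only works because the lower tail of $\omega(\mathcal G(q,1/2))$ is super-exponentially small in $q$ (via Janson), not merely superpolynomial; the paper's \Cref{prop:random-graph} does this. And the self-complementarity shortcut is fine for a pure random graph, but it is no longer available once you scramble — the paper just runs the clique and independent-set arguments symmetrically, which costs only a union bound.
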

Finally, we prove a simple proposition which determines $m_n(k)$, defined as the minimum of $m_G(k)$ over all $n$ vertex graphs $G,$ up to a constant factor, provided $k$ is small enough compared to $n$.

\begin{prop}\label{prop:small-r}
Provided $n$ is sufficiently large compared to $k\ge 2$ we have $m_n(k)=\Theta(k \log n).$
\end{prop}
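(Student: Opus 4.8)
The plan is to treat the lower and upper bounds on $m_n(k)$ separately; the lower bound is a short greedy argument, and the upper bound comes from the random graph together with a tail estimate for its clique number.

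\textbf{Lower bound $m_n(k)=\Omega(k\log n)$.} Fix any $n$-vertex graph $G$ and repeatedly pull out medium-sized homogeneous sets: while the current graph still has at least $\sqrt n$ vertices, the Erd\H{o}s--Szekeres bound \cite{E-S} produces a clique or an independent set of size at least $\frac14\log n$ inside it; extract a subset of exactly $\lceil\frac14\log n\rceil$ of its vertices, record whether it came from a clique or from an independent set, and delete it. When the process halts at most $\sqrt n$ vertices are left, so $\Omega(n)$ vertices were removed in sets of size $\Theta(\log n)$, i.e.\ $\Omega(n/\log n)$ sets were extracted; by pigeonhole at least $\Omega(n/\log n)$ of them are cliques, or at least $\Omega(n/\log n)$ of them are independent sets, and since $n$ is large compared to $k$ this is at least $k-1$. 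In the first case take $k-1$ of these pairwise disjoint cliques; their union $U$ has $\Omega(k\log n)$ vertices, and every independent set of $G[U]$ meets each of the $k-1$ cliques in at most one vertex, so $G[U]$ has no independent set of size $k$. The second case is identical with cliques and independent sets interchanged. Either way $m_G(k)>|U|=\Omega(k\log n)$, and as $G$ was arbitrary, $m_n(k)=\Omega(k\log n)$.

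\textbf{Upper bound $m_n(k)=O(k\log n)$.} Take $G\sim\mathcal G(n,1/2)$ and set $m:=\lceil C(k)\log n\rceil$ for a constant $C(k)>0$ to be chosen (so $m=\Theta(k\log n)$ for $k$ fixed, the natural reading given the hypothesis on $n$, with the $\Theta$-constants allowed to depend on $k$). By self-complementarity it is enough to bound, for a single fixed $m$-subset $S$, the probability $p$ that $G[S]$ spans no clique of size $k$, and then union-bound over the $\binom nm\le n^m$ choices of $S$; the same bound handles independent sets. The key point is $p\le\exp(-\gamma(k)m^2)$ for a suitable $\gamma(k)>0$ depending only on $k$. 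To see it, fix inside $S$ a \emph{linear} $k$-uniform hypergraph $\mathcal F$ --- a family of $k$-subsets of $S$, any two of which share at most one vertex --- with $|\mathcal F|\ge\gamma'(k)m^2$ edges; such an $\mathcal F$ exists by a routine greedy argument (a new $k$-set can be added as long as fewer than $\binom m2/\binom k2$ pairs have been covered, so one reaches $\Omega(m^2/k^4)$ edges). The edge sets $\binom A2$ for distinct $A\in\mathcal F$ are pairwise disjoint, so the events ``$A$ spans a clique of $G[S]$'' are mutually independent, each of probability $2^{-\binom k2}$; hence $p\le(1-2^{-\binom k2})^{|\mathcal F|}\le\exp(-2^{-\binom k2}\gamma'(k)m^2)$. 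The union bound now yields total failure probability at most $2\exp(m\ln n-\gamma(k)m^2)<1$ as soon as $\gamma(k)m>\ln n$; taking $C(k)=2/\gamma(k)$ achieves this for all $n$ large relative to $k$, so some $G$ has $m_G(k)\le m$, i.e.\ $m_n(k)\le m=O(k\log n)$.

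\textbf{Main obstacle.} The delicate step is the tail bound $p\le e^{-\gamma(k)m^2}$. Applying the second moment method or Janson's inequality directly to the number of $k$-cliques in $G[S]$ fails, because $k$-cliques overlap so heavily that the pair-correlation term swamps the expectation and gives nothing useful. Passing to a linear sub-hypergraph of candidate cliques kills all correlations at only a modest loss in the count, and this is exactly what rescues the estimate. (If one wanted $\gamma(k)$ of the optimal order $\Theta(1/k)$, the known asymptotic count of $K_k$-free graphs would give it, but the crude greedy bound is already more than sufficient here.)
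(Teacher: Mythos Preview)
Your lower bound is correct and is essentially the paper's argument.

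The upper bound, however, does not prove the statement. The claim $m_n(k)=\Theta(k\log n)$ is meant with \emph{absolute} constants --- the explicit factor $k$ inside the $\Theta$ would be vacuous if the implied constants could absorb it --- and the paper indeed establishes $(0.5+o(1))k\log n \le m_n(k)\le (2+o(1))k\log n$. Your linear-hypergraph bound gives only $\gamma(k)\approx 2^{-\binom{k}{2}}k^{-4}$, so the union bound closes only once $m\gtrsim 2^{\binom{k}{2}}k^{4}\log n$, exponential in $k^2$ rather than linear in $k$. (Incidentally, your dismissal of Janson is too hasty: carried out as in Proposition~\ref{prop:random-graph} it yields an exponent of order $m^{2}/k^{4}$ and hence $m=O(k^{4}\log n)$, already stronger than your bound though still short of $O(k\log n)$.)

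The paper obtains the correct exponent $\Theta(m^{2}/k)$ via the Kolaitis--Pr\"omel--Rothschild theorem that almost every $K_k$-free graph on $m$ vertices is $(k-1)$-colourable: then $\P(\omega(\mathcal{G}(m,1/2))<k)$ is at most $(1+o(1))$ times the number of $(k-1)$-coloured graphs divided by $2^{\binom{m}{2}}$, and any $(k-1)$-colouring forces at least $(k-1)\binom{m/(k-1)}{2}\approx m^{2}/(2k)$ non-edges, giving the tail $2^{-(1+o(1))m^{2}/(2k)}$ and hence $m=(2+o(1))k\log n$. You name exactly this route in your final parenthetical and then set it aside as unnecessary --- but it is the heart of the proof.
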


\textbf{Notation.}
We denote by $K_k$ the complete graph on $k$ vertices and by $I_k$ the independent set of $k$ vertices. We denote by $\omega(G)$ the clique number of $G$. All our logarithms are in base $2$. When saying a graph $G$ is \lramsey{m}{r} we do not require either $m$ or $r$ to be integers, we want that any set of at least $m$ vertices contains a clique and an independent set of size at least $r.$ 

\section{Locally Ramsey graphs and lexicographic products}\label{sec:prelim}
We begin with a proposition which provides us with a starting point for our further constructions.

\begin{prop}\label{prop:random-graph}
For any $n$ there exists an $n$-vertex graph which is \lramsey{2^{r+8}\log n}{r} for all $r$.
\end{prop}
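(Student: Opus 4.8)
The natural approach is to take $G \sim \mathcal{G}(n,1/2)$ and show that with positive probability (in fact with high probability) every vertex subset of size $m = 2^{r+8}\log n$ contains both a clique and an independent set of size $r$; by symmetry between $G$ and its complement it suffices to handle cliques, then union bound over the two cases. So first I would fix a subset $S$ of exactly $m$ vertices and estimate the probability that $S$ contains no clique of size $r$. The cleanest way is a greedy/sequential exposure argument: inside $S$, repeatedly pick a vertex, keep only its neighbours, and recurse; each round the surviving set shrinks by a factor that is, in expectation, about $1/2$, and one shows via a Chernoff-type concentration bound that after $r-1$ successful rounds we still have at least one vertex left, unless some round "fails" to retain roughly half its neighbours. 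Alternatively — and this is probably the slicker route — one directly bounds $\Pr[\omega(G[S]) < r]$ by comparing with the standard first-moment computation: partition (or greedily extract) within $S$ roughly $m / (2^{r+1})$ disjoint candidate $r$-sets, no wait — better to just use that the probability a fixed $r$-set is a clique is $2^{-\binom{r}{2}}$ and instead use the Janson-type or dependency-graph lower-tail bound on the number of $r$-cliques in $G[S]$.

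Actually the quantitatively efficient argument, which gives exactly the $2^{r+8}$ dependence, is the following two-step bucketing. Greedily pull out of $S$ a family of $t = \lfloor m/2^{r} \rfloor$ pairwise disjoint $r$-element subsets $T_1,\dots,T_t$ (possible since $m/2^r \cdot r \le m$ for the relevant range, or one simply takes $t$ disjoint sets of size $r$ using $tr \le m$). The events "$T_i$ is a clique" are mutually independent since the $T_i$ are vertex-disjoint, each has probability $2^{-\binom r2}$, so
$$\Pr[\,S\text{ has no }r\text{-clique}\,] \le \Pr[\text{no }T_i\text{ is a clique}] = \left(1 - 2^{-\binom r2}\right)^{t} \le \exp\!\left(-t\,2^{-\binom r2}\right).$$
This is too weak for large $r$, so the right move is to not use singleton-disjoint $r$-sets but to recurse: but since the proposition only claims $m$ as large as $2^{r+8}\log n$, the exponent we need to beat in the union bound is $\log\binom{n}{m} \le m\log n$, and $t\,2^{-\binom r2}$ is nowhere near $m\log n$. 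So the disjoint-$r$-sets idea fails, and the correct approach must be the sequential neighbourhood-shrinking one, which loses only a constant factor per step rather than a factor $2^{\binom r2}$ overall.

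Hence the plan: fix $S$, $|S| = m = 2^{r+8}\log n$; run the process $S_0 = S \supseteq S_1 \supseteq \cdots$ where $v_{i+1}$ is an arbitrary vertex of $S_i$ and $S_{i+1} = S_i \cap N(v_{i+1})$; each $|S_{i+1}|$, conditioned on $S_i$, is $\mathrm{Bin}(|S_i|-1, 1/2)$, so by a Chernoff bound $|S_{i+1}| \ge |S_i|/3$ except with probability $\exp(-\Omega(|S_i|))$; as long as we never drop below $1$ before step $r$ we have found a clique $\{v_1,\dots,v_r\}$, and $|S_i| \ge m/3^i \ge m/3^r \ge 2^{r+8}\log n / 3^r$ which — hm, $3^r$ beats $2^r$, so this too is too lossy. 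The fix is to keep, at step $i$, only the *majority* side: since $|S_i| - 1 = |S_i \cap N(v)| + |S_i \setminus N(v)|$, one of the two is $\ge (|S_i|-1)/2$, and we build *either* a clique *or* an independent set accordingly — this is exactly the Erdős–Szekeres greedy bound done with concentration. Tracking it, after $2r$ steps the surviving set has size $\ge m/2^{2r}\cdot(\text{const})$... still not matching. So the genuinely correct and standard argument is: within $S$ of size $m = 2^{r+8}\log n$, the probability of *no* $r$-clique is at most $\binom{m}{r} \cdot (\text{prob. a fixed } r\text{-set is not extendable}...)$ — I will instead invoke the known sharp bound $\Pr[\omega(\mathcal G(m,1/2)) < r] \le \exp(-\Omega(m^2/4^r))$-type estimate (via Janson's inequality on the clique complex), note $m^2/4^r = 2^{16}(\log n)^2 \cdot 2^{2r}/4^r \cdot$ — wait that is $2^{16}(\log n)^2$, independent of $r$, giving failure probability $\le \exp(-c \cdot 2^{16}(\log n)^2)$; then union over $\le 2^m$ choices of $S$ and over $G \leftrightarrow \bar G$ kills it since $m \ll (\log n)^2 \cdot 2^{16}$ is false — $m = 2^{r+8}\log n$ can be huge. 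The main obstacle, which I would need to resolve carefully, is precisely calibrating which concentration inequality makes the $m = 2^{r+8}\log n$ threshold work against the $\binom{n}{m}$ union bound; the cleanest resolution is likely a direct martingale/exposure argument showing $G[S]$ contains a clique of size $r$ with probability $\ge 1 - 2^{-2m}$, after which the union bound and complementation are routine.
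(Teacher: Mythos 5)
There is a genuine gap here: you never arrive at a concentration inequality that actually closes the argument, and the one Janson-type bound you explicitly write down, $\Pr[\omega(\mathcal G(m,1/2))<r]\le\exp(-\Omega(m^2/4^r))$, is wrong for this regime and (as you yourself then observe) would be hopeless against the $\binom{n}{m}\le e^{m\log n}$ union bound. The point you are missing is that when $m$ is comfortably above the clique-number threshold --- and $m=2^{r+8}\log n$ puts $r$ at most $\log m$, well below the threshold $\approx 2\log m$ --- the variance-like term $\Delta$ in Janson's inequality is dominated by pairs of $r$-sets sharing only \emph{two} vertices, not by the nearly coincident ones. Concretely, with $\mu=\binom{m}{r}2^{-\binom r2}$ one finds $\Delta\le C\mu^2 r^4/m^2$, because the $i=2$ term in $\sum_{i\ge 2}\binom{r}{i}\binom{m-r}{r-i}2^{\binom i2}/\binom{m}{r}$ is of order $r^4/m^2$ and the rest are geometrically smaller. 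Since $\mu$ is astronomically large in this regime, $\mu+\Delta\le 2\Delta$, and Janson gives
$$\Pr[\omega(\mathcal G(m,1/2))<r]\le\exp\!\left(-\frac{\mu^2}{2(\mu+\Delta)}\right)\le\exp\!\left(-\Omega\!\left(\frac{m^2}{r^4}\right)\right),$$
with $r^4$ (polynomial), not $4^r$ (exponential), in the denominator. That distinction is the entire ball game: with the correct exponent the union bound over all $\binom{n}{m}$ sets requires only $m\gtrsim r^4\log n$, which $2^{r+8}\log n$ supplies with room to spare, and then a further union over $r\le\log n$ and complementation finishes it. Your sequential-exposure, disjoint-$r$-sets, and greedy Erd\H{o}s--Szekeres detours are correctly diagnosed as too lossy (each loses a factor exponential in $r$ per step), but what was needed was not a new scheme --- it was the correct evaluation of $\Delta$ in Janson, which you stopped short of.
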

\begin{proof}
Let us first fix an $r\ge 2$ and set $m=\ceil{2^{r+8}\log n}$. 
The chance that an $m$ vertex induced subgraph of $\G(n,1/2)$ does not contain $K_r$ (or $I_r$) is equal to the chance that $\omega(\G(m,1/2))<r.$ Using Janson's inequality (as in Section 10.3 in \cite{alon-spencer}) implies that this probability is at most $e^{-\frac{\mu^2}{2(\mu+\Delta)}}$ where $\mu=\binom{m}{r}2^{-\binom{r}{2}}$ and $\Delta=\mu^2\cdot \sum_{i=2}^{r-1} \binom{r}{i}\binom{m-r}{r-i}2^{\binom{i}{2}}/\binom{m}{r}.$ In our case $\Delta \le \mu^2 \binom{m}{r}^{-1}\binom{r}{2}\binom{m-r}{r-2}\cdot 2\sum_{i=2}^{r-1}2^{2-i}\le 2\mu^2r^4/m^2.$ It is easy to check that $\mu > m^2/r^4$ and hence $\mu +\Delta \le 2\Delta.$ Thus, $\frac{\mu^2}{2(\mu+\Delta)} \ge \frac{m^2}{8r^4}$ and therefore $\P(\omega(\G(m,1/2))<r) \le e^{-\frac{m^2}{8r^4}}.$ Finally, by a union bound, the probability that there exists a set of $m$ vertices in $\G(m,1/2)$ which does not have $K_r$ or $I_r$ is at most 
$$ \binom{n}{m} \cdot 2 e^{-\frac{m^2}{8r^4}}\le 2e^{m(\log n-m/(8r^4))}\le 2e^{-m\log n/8} \le \frac{1}{n^2}.$$
Here in the second to last inequality we used $2^{r+8}\ge 9r^4.$ Now taking a union bound over all $r \le \log n$ we deduce that the desired graph exists.
\end{proof}

Note that in fact we proved that $\G(n,1/2)$ is \lramsey{2^{r+8}\log n}{r} with high probability. This bound can be slightly improved (see Section \ref{sec:small-k}) but we would gain little in our applications since $m_G(r)$ is going to ``go within a log'' so we chose for simplicity to show the above bound.

As already mentioned in the introduction, the random graph performs close to best possible when $r$ is very small. Our next construction already does much better when $r\ge \log n$, it serves as a basis and an illustration for our main construction presented in the following section. 

\begin{lem}\label{lem:illustration}
For any integer $N\ge 4$ there exists an $N$-vertex \lramsey{m}{r} graph for any $m,r$ which satisfy $\log r \le \frac{(\log m)^2}{2^{9}\log N}$.
\end{lem}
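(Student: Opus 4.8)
The plan is to build the $N$-vertex graph by iterating the lexicographic product, starting from the random-graph examples provided by \Cref{prop:random-graph}. Recall that the lexicographic product $G[H]$ (replace each vertex of $G$ by a copy of $H$, with complete bipartite graphs between copies joined by edges of $G$) interacts nicely with local Ramsey-ness: if $G$ is \lramsey{m_1}{r_1} and $H$ is \lramsey{m_2}{r_2}, then $G[H]$ should be \lramsey{m_1 m_2}{r_1 r_2}. Indeed, given a set $S$ of at least $m_1 m_2$ vertices in $G[H]$, either at least $m_1$ of the blocks contain a vertex of $S$ — in which case projecting to $G$ gives a clique (resp. independent set) of size $r_1$ among the blocks, and within each such block we still only know $S$ meets it — so this direction needs a bit of care; the clean statement is that if at least $m_1$ blocks each contain at least $m_2$ vertices of $S$ we win, and otherwise at most $m_1$ blocks are "heavy", so some heavy block (there is at least one, since $|S| \ge m_1 m_2$ and the light blocks contribute fewer than $m_1 m_2$ vertices total only if... ) — I would instead use the standard averaging: among any $m_1 m_2$ vertices, either some block has $\ge m_2$ of them, giving $K_{r_2}$ and $I_{r_2}$ inside one block, or every block has $< m_2$, so $> m_1$ blocks are occupied, giving $K_{r_1}$ and $I_{r_1}$ across blocks; combine to get $K_{r_1 r_2}$ and $I_{r_1 r_2}$ via the product structure. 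The first key step is therefore to state and prove this product lemma cleanly (this is the kind of statement surely recorded earlier in \Cref{sec:prelim}, so I may cite it; if not, it is a one-paragraph argument).

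The second step is to choose the iteration. Let $t$ be the number of factors. Take $t$ graphs $G_1, \dots, G_t$, each on roughly $n_0 = N^{1/t}$ vertices, where $G_i$ is the random example from \Cref{prop:random-graph}: so $G_i$ is \lramsey{2^{r_i + 8}\log n_0}{r_i} for every $r_i$. Form $G = G_1[G_2[\cdots[G_t]]]$, which has $n_0^t = N$ vertices. By the product lemma, $G$ is \lramsey{\prod_i 2^{r_i+8}\log n_0}{\prod_i r_i} for any choice of $r_1, \dots, r_t$. Now, to hit target parameters $m$ and $r$, I would split as evenly as possible: set each $r_i = r^{1/t}$, so $\prod r_i = r$, and then the corresponding $m$-value is $\prod_i 2^{r^{1/t} + 8}\log n_0 = \big(2^{r^{1/t}+8}\log N^{1/t}\big)^t = 2^{t(r^{1/t}+8)} (\log N / t)^t$. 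Taking logarithms, $\log m \approx t \, r^{1/t} + 8t + t\log(\log N / t)$. The dominant term is $t\, r^{1/t}$ (plus the $t \log\log N$ term), and the standard optimisation of $t\, r^{1/t}$ over $t$ is achieved around $t \approx \ln r$, giving $t\, r^{1/t} = \Theta(\log r)$ — wait, that gives $\log m = \Theta(\log r + \log r \cdot \log\log N)$, i.e. $\log m \approx \log r \log\log N$, which is exactly the shape $\log r \le (\log m)^2 / (2^9 \log N)$ demands once we are more careful: we want $(\log m)^2 \gtrsim \log r \cdot \log N$, not $\log r \cdot \log\log N$. So in fact I do NOT want $n_0 = N^{1/t}$ with the split-evenly strategy pushed to $t = \log r$; rather the constraint $\log r \le (\log m)^2/(2^9 \log N)$ suggests choosing $t$ so that $\log n_0 = (\log N)/t$ balances against $r^{1/t}$, i.e. solving $r^{1/t} \approx (\log N)/t$; taking logs, $(\log r)/t \approx \log\log N - \log t$, so $t \approx \log r / \log\log N$ roughly, and then $\log m \approx t \cdot r^{1/t} \approx t \cdot (\log N)/t \cdot (\text{const}) $... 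I would carefully redo this: with $t = \lceil \log r / \log\log N \rceil$ we get $r^{1/t} \le 2^{\log\log N} = \log N$, hence $\log m \le t(r^{1/t} + 8) + t\log\log N \le t \cdot O(\log N) = O(\log r \cdot \log N / \log\log N)$, hmm that's $\log m = O(\log r \log N/\log\log N)$, giving $(\log m)^2 = O((\log r)^2 (\log N)^2/(\log\log N)^2)$ which is much bigger than $2^9 \log r \log N$ — so the bound is satisfied with huge room. The point is just to verify the inequality $\log r \le (\log m)^2/(2^9 \log N)$ holds for the $(m,r)$ that arise, possibly by choosing $t$ as a function of $m, r, N$ directly and checking the arithmetic, absorbing the $+8$'s and constants into the $2^9$.

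Concretely, then, the steps in order: (1) invoke/prove the lexicographic product lemma; (2) set $n_0 = \lceil N^{1/t}\rceil$ and define $G$ as the $t$-fold lexicographic power of the \Cref{prop:random-graph} graph on $n_0$ vertices, for a $t$ to be chosen; (3) observe $G$ is \lramsey{(2^{s+8}\log n_0)^t}{s^t} for every $s$, by applying the product lemma $t-1$ times with all factors equal and all local clique-sizes equal to $s$; (4) given $m$ and $r$ with $\log r \le (\log m)^2/(2^9 \log N)$, choose $t := \lceil \log r / \log\log N \rceil$ (so $s := r^{1/t} \le \log N$) and verify that $(2^{s+8}\log n_0)^t \le m$, i.e. that $t(s + 8 + \log\log N) \le \log m$; using $s \le \log N$ this is at most $t(\log N + 8 + \log\log N) \le 2 t \log N$ (for $N$ not tiny), and $2t\log N \le 2(\log r/\log\log N + 1)\log N$, which we must show is $\le \log m$. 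From the hypothesis, $\log m \ge \sqrt{2^9 \log r \log N} = 2^{4.5}\sqrt{\log r \log N}$; comparing $2^{4.5}\sqrt{\log r \log N}$ with $2(\log r/\log\log N + 1)\log N$ requires $\log r$ to not be too large relative to $\log N$ — this is where I expect the real bookkeeping to live, and I would handle the regime split ($r$ large vs. $r$ moderate) if needed, or simply choose $t$ more cleverly (e.g. $t = \lceil (\log m)/(2^8 \log N)\rceil$ if that is what makes $(2^{s+8}\log n_0)^t \le m$ come out cleanly, back-solving for $s$ and checking $s^t \ge r$). The main obstacle is precisely this final verification — matching the clean hypothesis $\log r \le (\log m)^2/(2^9\log N)$ to the slightly messy product bound, getting the constant $2^9$ to actually work after the $+8$ additive losses from \Cref{prop:random-graph} and the $\lceil\cdot\rceil$ roundings compound across $t$ factors. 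Everything else is routine.
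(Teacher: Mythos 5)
Your proposed product lemma --- that $G$ being $(m_1,r_1)$-locally Ramsey and $H$ being $(m_2,r_2)$-locally Ramsey implies $G[H]$ is $(m_1m_2,r_1r_2)$-locally Ramsey --- is false, and it is the load-bearing step of your plan. For a counterexample, take $S$ to be one vertex from each block of $G[H]$: then $|S|=|V(G)|$, which is typically far larger than $m_1m_2$, yet the induced subgraph on $S$ is a copy of $G$, so the largest clique in $S$ has size only $\omega(G)$, well below $r_1r_2$ in any interesting regime (and similarly for independent sets). The averaging argument you sketch does not repair this: in the case where some block holds at least $m_2$ points of $S$ you obtain only a $K_{r_2}$ inside that one block, and in the complementary case (more than $m_1$ occupied blocks) you obtain only a $K_{r_1}$ across blocks; these cases are mutually exclusive, and ``combine to get $K_{r_1r_2}$'' is not an available operation across a disjunction. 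To actually produce $K_{r_1r_2}$ one needs $r_1$ pairwise-adjacent blocks \emph{each} of which contains at least $m_2$ points of $S$, and no two-way split of the above form supplies that.

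This difficulty is exactly what \Cref{lem:lex-product} is designed to handle. Given $S\subseteq V(G^\ell)$, the paper studies the profile of block sizes $|S_v|$ over $v\in V(G)$ and shows, by a geometric pigeonhole over several thresholds, that for some index $t$ there are $m_t$ blocks each of size at least $m/(2^tm_{t+1})$; one then finds a $K_{r_t}$ among those blocks and recurses inside each. The price of this fix is the $-\ell\log(2m_2)$ correction in the numerator of \eqref{eq:beta}, which your clean bound $\bigl((2^{s+8}\log n_0)^t, s^t\bigr)$ omits entirely; in the application this costs roughly $\log\log n$ per factor. Once the correct recursive bound is in hand, the remaining step is indeed to optimise the base size $n$ and the exponent $\ell$ (your $t$), and the paper's proof of \Cref{lem:illustration} does exactly that, so the overall plan is in the right spirit. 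But without replacing the false product lemma by a correct recursive statement, the arithmetic you outline has no valid input.
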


Upon inverting we obtain a graph $G$ for which $m_G(r)\le 2^{16\sqrt{2\log N \log r}}.$ In particular, when $r=\log N$ this is already significantly better compared to about $\sqrt{N}$ in case of the random graph.

The example we use to prove the lemma is the lexicographic product of a random graph $\G(n,1/2)$ with itself multiple times. The lexicographic product $G\times H$ of two graphs $G$ and $H$ is defined as the graph on the vertex set $V(G)\times V(H)$ in which two vertices $(v,u)$ and $(x,y)$ are adjacent iff $v \sim_G x$ or $v=x$ and $u \sim_H y$. We write $G^{\ell}$ for the lexicographic product of $G$ with itself $\ell$ times. The main property of the lexicographic product which makes them natural candidates for our graphs is that clique and independence numbers are multiplicative (see \cite{lex-product-property}). Let us give some intuition as to why this is useful. Let $G \sim \G(n,1/2)$ and let us compare $G^\ell$ with the random graph on the same number of vertices $G' \sim \G(n^\ell,1/2).$ If we take an induced subgraph $H$ of $G$ on $m$ vertices then $H^{\ell}$ gives us a subset of $m^\ell$ vertices of $G^\ell$ which by the multiplicative property above contains both a clique and an independent set of size about $(2\log m)^\ell.$ On the other hand a subset of $m^\ell$ vertices of $G'$ w.h.p. does not have cliques (or independent sets) of size $2\log (m^\ell) = 2\ell \log m.$ This means that, at least if we restrict our attention to subsets of $G^\ell$ arising in this product fashion, $G^\ell$ is a much better \lramsey{m}{r} graph than $G'$ for most choices of the parameters. Of course one may not just restrict attention to such sets. The following lemma allows one to show that even in arbitrary subsets of $G^{\ell}$ it is possible to find big cliques (and independent sets). 

The statement of the next lemma is somewhat technical, one of the reasons for this is that we want to state it in a very general form since we want to use it twice with very different choices of parameters. Second reason is that we believe it might be useful in improving other constructions people might come-up with in the future, as well as possibly for other problems involving subgraphs of lexicographic products. 

Let us sketch the proof idea. The lemma starts with a graph $G$ on $n$ vertices in which for some $2\le r_2<r_3< \ldots <r_k$ we know that any $m_t$ vertices contain both $K_{r_t}$ and $I_{r_t}$ for all $t \ge 2$. We now take a subset $S$ of $G^{\ell}$ in which we want to find a big clique (the argument for an independent set will be analogous). For every vertex $v\in G$ we denote by $S_v$ the subset of $S$ consisting of all elements having $v$ as their first coordinate. We then look at $m_t$ vertices with highest $|S_v|.$  For some $t$ all these vertices need to actually have a reasonably large $|S_v|,$ say at least $m',$ as we know that $\sum_{v \in G} |S_v|=|S|.$ We now use the information that any $m_t$ vertices in $G$ have a clique of size $r_t,$ so in particular among our top $m_t$ vertices some $r_t$ make a clique, say $1,\ldots, r_t$. Now for any two elements of $S$ if their first coordinates are adjacent in $G$ then they are also adjacent in $G^\ell$. So if we look at the sets $S_{i}, i \le r_t$ all the edges between $S_{i}$ and $S_j$ for $i \neq j$ exist. In particular, if we find a clique in each of $S_i$ we may take a union of these cliques to obtain a clique in $S$. Since all vertices in $S_i$ share the first vertex and $|S_i| \ge m'$, finding a clique reduces to finding a clique in a subset of size $m'$ of $G^{\ell-1}$ for which we may use induction. For an example, if we work with the assumption that $G$ is \lramsey{2\log n}{2}, which we can get from the random graph, then $r_2=2, m_2=2\log n$ and let $|S|=m.$ We split in $2$ cases, either some vertex has $|S_v| \ge m/(4\log n)$ or there are $2\log n$ vertices which all have $|S_v| \ge m/(2n)$, since otherwise $|S|=\sum_{v \in G} |S_v|<(2 \log n)\cdot m/(4\log n)+n \cdot m/(2n)=m$. In the former case we take a vertex $v$ with $|S_v|\ge m/(4 \log n)$ and look for a clique in $S_v$. This reduces the task to looking for a clique in a subset of size $m/(4\log n)$ of $G^{\ell-1}$ which we do by induction. In the latter case by our assumption on $G$ among $2\log n$ vertices there must exist an edge $vu$ of $G.$ This means that we can find a clique of twice the size we are guaranteed in a subset of size $m/(2n)$ of $G^{\ell-1},$ which we once again do by induction. Optimising the choice of parameters will already bring us close to the bound in \Cref{lem:illustration}. 

It will be more convenient to work with the inverse of $m_{G^\ell}(r).$ So, let $\beta_G(m,\ell)$ denote the largest $r$ such that in any $m$-vertex subset of $G^{\ell}$ we can always find both $K_r$ and $I_r$.

\begin{lem}\label{lem:lex-product}
Let $G$ be an $n$-vertex graph. Suppose that for some $2 \le r_2 < \ldots <r_k$ we know that $ m_G(r_t)\le m_t,$ for $2\le t\le k$. Then
\begin{equation}\label{eq:beta}
\log \beta_G(m,\ell) \ge \frac{\log m-\ell \log(2m_2)}{\max\limits_{2 \le t \le k} \left( \frac{\log (m_{t+1}/m_2)+t}{\log r_t}\right)},
\end{equation}
for any choice of $m_{k+1}\ge \min(n,m)+1.$
\end{lem}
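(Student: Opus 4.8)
The plan is to prove the bound by induction on $\ell$, writing $D:=\max_{2\le t\le k}\frac{\log(m_{t+1}/m_2)+t}{\log r_t}$ for the denominator. Two reductions come first. Since $\beta_G\ge 1$ always, one may assume $\log m>\ell\log(2m_2)$, as otherwise the right-hand side is non-positive and there is nothing to prove. And since $m_G$ is monotone non-decreasing, replacing each $m_t$ by $\min_{t\le s\le k}m_s$ preserves every hypothesis while only decreasing $D$, so one may also assume $m_2\le m_3\le\cdots\le m_k$. The base case $\ell=0$ is then immediate: $G^0$ is a single vertex, so $\beta_G(m,0)$ equals $1$ when $m\le1$ and is vacuously infinite when $m>1$, and $\frac{\log m}{D}\le\log\beta_G(m,0)$ either way.

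For the inductive step I would fix a subset $S\subseteq V(G^\ell)$ with $|S|\ge m$ (and, since larger sets only make things easier, take $|S|$ minimal, so that the hypothesis $m_{k+1}\ge\min(n,m)+1$ forces the number of vertices $v$ with $S_v\ne\emptyset$ to be less than $m_{k+1}$). Here $S_v\subseteq S$ denotes the elements of $S$ with first coordinate $v$: the subgraph $G^\ell$ induces on $S_v$ is a copy of an induced subgraph of $G^{\ell-1}$ on $|S_v|$ vertices, and if $v,w$ are adjacent (resp.\ non-adjacent) in $G$ then $S_v$ and $S_w$ are completely joined (resp.\ completely non-adjacent) in $G^\ell$. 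Listing the vertices of $G$ so that $|S_{v_1}|\ge|S_{v_2}|\ge\cdots$, the crucial step is the dichotomy: \emph{either} (a) $|S_{v_1}|\ge\frac{m}{2m_2}$, \emph{or} (b) for some $2\le t\le k$ at least $m_t$ vertices $v$ satisfy $|S_v|\ge\frac{m}{2^{t+1}m_{t+1}}$. If both failed, then splitting the support into the parts indexed by $[1,m_2-1],[m_2,m_3-1],\dots,[m_k,m_{k+1}-1]$ (which cover it, as the $m_t$ are non-decreasing and the support has fewer than $m_{k+1}$ vertices) and bounding each term of each part by its value at the left endpoint gives
$$ |S| \;<\; (m_2-1)\cdot\tfrac{m}{2m_2}\;+\;\sum_{t=2}^{k}(m_{t+1}-m_t)\cdot\tfrac{m}{2^{t+1}m_{t+1}} \;<\; \tfrac m2+\tfrac m4\;<\;m, $$
a contradiction.

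In case (a) I would apply the induction hypothesis inside $S_{v_1}$, a subset of a copy of $G^{\ell-1}$ on at least $\frac{m}{2m_2}$ vertices, to obtain both a clique and an independent set of size $\beta_G\big(\tfrac{m}{2m_2},\ell-1\big)$ in $S$; the induction hypothesis then yields $\log\beta_G\big(\tfrac{m}{2m_2},\ell-1\big)\ge\frac{\log m-\log(2m_2)-(\ell-1)\log(2m_2)}{D}=\frac{\log m-\ell\log(2m_2)}{D}$, as required. In case (b) the $m_t$ heaviest vertices form a set of $m_t\ge m_G(r_t)$ vertices of $G$, hence contain both a clique $W$ and an independent set $W'$ of size $r_t$; every $w\in W\cup W'$ has $|S_w|\ge\frac{m}{2^{t+1}m_{t+1}}$, so the induction hypothesis supplies a clique of that size inside each $S_w$ for $w\in W$ and an independent set of that size inside each $S_w$ for $w\in W'$. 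By the adjacency structure noted above, the unions of these are a clique and an independent set of $G^\ell$ lying in $S$, of size $r_t\cdot\beta_G\big(\tfrac{m}{2^{t+1}m_{t+1}},\ell-1\big)$. Substituting the induction hypothesis and taking logarithms, the inequality that remains reduces to exactly $D\log r_t\ge t+\log(m_{t+1}/m_2)$, which holds by the definition of $D$.

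The hard part — really the only delicate point — is choosing the thresholds $\frac{m}{2m_2}$ and $\frac{m}{2^{t+1}m_{t+1}}$ in the dichotomy. They are forced from above by the demand that each branch of the recursion reproduce the target bound term by term, and the geometric factors $2^{-(t+1)}$ together with the slack built into $m_{k+1}\ge\min(n,m)+1$ are precisely what makes the two cases exhaustive; once they are in place, everything else is bookkeeping with logarithms.
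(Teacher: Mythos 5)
Your proposal follows essentially the same argument as the paper's proof: induct on $\ell$; establish by an averaging/pigeonhole argument the dichotomy over the fiber sizes $|S_v|$; apply the hypothesis $m_t\ge m_G(r_t)$ to find $K_{r_t}$ and $I_{r_t}$ among the heaviest vertices; and recurse into the fibers $S_w$, using the complete-join structure of the lexicographic product. The differences are cosmetic: the paper's base case is $\ell=1$, handled directly from the hypothesis on $m_G$, rather than your $\ell=0$ (where the meaning of $G^0$ is a convention the paper never introduces); and the paper folds your cases (a) and (b) into one claim with $m_1=r_1=1$ and thresholds $m/(2^t m_{t+1})$, while you use $m/(2^{t+1}m_{t+1})$ — a factor-of-two shift that is absorbed by the $\log(2m_2)$ term, so the final inequality $D\log r_t\ge t+\log(m_{t+1}/m_2)$ is the same.

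One small caution: your justification for the monotonization — that replacing each $m_t$ by $\min_{t\le s\le k}m_s$ ``only decreases $D$'' — is not correct as stated. Lowering $m_2$ \emph{raises} every term $\log(m_{t+1}/m_2)$ in $D$, and it also raises the numerator $\log m-\ell\log(2m_2)$; one can cook up parameters where the right-hand side of \eqref{eq:beta} strictly decreases after the replacement. The paper's phrasing (``doing this can only increase the target function'') has the same gap. Since in every application the $m_t$ are chosen increasing to begin with, this never bites, but if you want the lemma in the stated generality the cleanest fix is simply to add monotonicity of the $m_t$ as a hypothesis rather than claim it can be assumed without loss.
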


\begin{proof}
First note that if for some $i<j$ we have $m_i>m_j$ we may decrease $m_i$ to be equal to $m_j$, since $r_i<r_j$ implies $m_j \ge m_G(r_j) \ge m_G(r_i)$ and doing this can only increase the target function. So we may assume that $m_i$ is increasing.

We will prove the claim by induction on $\ell$ for every $m$ satisfying $\min(n,m)+1 \le m_{k+1}$ (where we are treating $G,r_i$'s and $m_i$'s as fixed parameters). 
For the base case of $\ell=1$ since $G^{\ell}=G$ we have $m \le n$ so $m_{k+1}\ge m+1$. We also have $m\ge 2m_2$ as otherwise $\beta(m,\ell)<0$ and the claim is trivial. So there exists some $2\le t\le k$ such that $m_t \le m < m_{t+1}.$ Since $m_t \ge m_G(r_t)$ it is sufficient to show that $\beta_G(m,1)\le r_t$. This indeed holds since $\log m - \log (2m_2)  < \log m_{t+1}-\log m_2= \log (m_{t+1}/m_2).$ 

Now assume $\ell \ge 2$ and that the claim holds for $\ell-1$ and any $m$ for which $\min(n,m)+1\le m_{k+1}.$ Let $S$ be a set of $m$ vertices in $G^{\ell}$. For any vertex $v\in G$ let us denote by $s(v)$ the number of elements in $S$ which have $v$ as their first coordinate. Let us also set $m_1=m_G(1)=1$ and $r_1=1$ for convenience.
\begin{claim*} For some $1 \le t \le k$ there are $m_t$ vertices $v \in G$ with $|S_v| \ge m/(2^t m_{t+1})$.
\end{claim*}

\begin{proof}
Let $v_1, \ldots, v_{s}$ be the vertices of $G$ which have $s(v_i)>0$ ordered so that $s(v_i)$ is decreasing in $i$. Note that $s \le \min(|G|,|S|) \le m_{k+1}-1.$ If $s(v_{m_i}) 
\ge  m/(2^i m_{i+1})$ for some $i\le k$ we may take $t=i$ and $\{v_1,\ldots, v_{m_t}\}$ all have $s(v_i) \ge  m/(2^t m_{t+1})$ so we are done. Hence, we may assume that $s(v_{m_i}) <  m/(2^i m_{i+1})$ for all $i\le k$. But this implies that $$|S|=\sum_{i=1}^s s(v_i)=\sum_{i=1}^{k}\sum_{j=m_i}^{m_{i+1}-1}s(v_j) \le \sum_{i=1}^{k} (m_{i+1}-m_i)\cdot m/(2^i m_{i+1})<m\sum_{i=1}^{k} \frac{1}{2^i}<m,$$ which is a contradiction.
\end{proof}

Since $m_t \ge m_G(r_t)$ among the $m_t$ vertices given by the claim there must be $r_t$ forming a $K_{r_t}$ in $G$. Let us denote by $S_1,\ldots,S_{r_t}\subseteq S$ the sets of elements in $S$ whose first coordinate is the $i$-th vertex of this clique. 
By definition of the lexicographic product if $2$ elements in $S$ have adjacent first coordinates they are adjacent in $G^\ell$ as well. This means that we can combine the cliques we find in each of $S_i$ into a single clique in $S$. Furthermore, since all elements in $S_i$ have the same first coordinate we can delete their first coordinate when looking for a clique, which leaves us with a subset of at least $m'=m/(2^{t} m_{t+1})$ elements of $G^{\ell-1}$ to which we can apply induction\footnote{Note that $m'<m$ so $\min(n,m')\le \min (n,m)\le m_{k+1}-1$ as required by the assumption.} to find a clique. This means we can find a clique in $G^{\ell}$ of size at least $r_t \cdot \beta_G(m',\ell-1)$. Repeating the argument for independent sets we can find an independent set of this size as well. Therefore:
\begin{align*}
    \log \beta(m,\ell) &\ge \log \left (r_t \cdot \beta\left(m',\ell-1\right)\right) = \log r_t + \frac{\log m' -(\ell-1)\log (2m_2)}{C} \\
    & = \frac{C\log r_t+\log m-\log (2^{t} m_{t+1})-\ell \log (2m_2)+\log (2m_2)}{C} \\
    & \ge \frac{\log m-\ell \log (2m_2)}{C}
\end{align*}
where $C=C(m_2,\ldots,m_{k+1},r_2,\ldots,r_k)=\max\limits_{2 \le t \le k} \left( \frac{\log (m_{t+1}/m_2)+t}{\log r_t}\right)$ and the last inequality follows since we get an equality if $t=1$ and since $C \log r_t \ge \log m_{t+1}-\log m_2+t$ if $t \ge 2.$ As this is precisely the RHS of \eqref{eq:beta} this completes the proof.
\end{proof}

\Cref{lem:illustration} now follows as a corollary upon making the appropriate choice for the parameters.

\begin{proof}[ of \Cref{lem:illustration}]
Let $G$ be the $n$-vertex graph which is \lramsey{2^{t+8}\log n}{t} for all $t$ provided by \Cref{prop:random-graph}. This implies we can apply \Cref{lem:lex-product} with $r_t=t$ and $m_t:=2^{t+8}\log n \ge m_G(t)$ for $2 \le t \le k \le \log n,$ where $k$ is the largest integer such that $m_{k}\le n,$ and $m_{k+1}:=2^{k+9}\log n>n$. The lemma implies that
\begin{equation}\label{eq:1}
    \log \beta_G(m,\ell) \ge  \frac{\log m-\ell\log \log n-11\ell}{2\log n/\log \log n},
\end{equation}
since $\log m_2=10+\log \log n,$ $\log (m_{t+1}/m_2)=t-1$ and $(2t-1)/\log t$ is increasing in $t.$

We claim that $G^\ell$ with an appropriate choice of parameters $n$ and $\ell$ (in terms of $N$ and $m$) provides us with the desired graph. Note that we may assume that $m \le N$ (as otherwise the claim is vacuous) and that $\log m \ge 2^4 \sqrt{ 2\log N}\ge 2^4\log \log N$ as otherwise the claimed bound holds trivially. Let $n$ be the smallest integer for which $\log n / \log \log n \ge 32\log N /\log m$ and let $\ell:= \ceil{\log m/(32 \log \log n)}.$  With this choice of $n$ and $\ell$ we get a graph on $n^\ell \ge N$ vertices, since $$\ell \log n\ge \frac{\log m}{32 \log \log n} \cdot \frac{32\log N \log \log n}{\log m} =\log N.$$ Furthermore, by \eqref{eq:1} any set of size $m$ contains both cliques and independent sets of size at least $2$ to the power
$$\frac{\log m - 12 \ell \log \log n}{2\log n/ \log \log n} \ge \frac{\log m - \frac{12}{16}\log m }{2\cdot 33\log N /\log m} = \frac{(\log m)^2}{33\cdot 8\log N}$$
where in the denominator we used that $\log n / \log \log n < 33\log N /\log m$ (which holds since $\log n /\log \log n$ grows slower than $n$ and $\log N/\log m \ge 1$) and in the numerator $\ell \le \log m/(16 \log \log n)$ (which holds since $\log m/(32 \log \log n)\ge 1/2$ and $\ceil{x} \le 2x$ for any $x \ge 1/2$).
\end{proof}

\section{Locally Ramsey graphs and scrambling}
It might be tempting to try to reiterate the argument used in the previous section by starting with our better construction in place of the random graph. Notice however, that all our examples are in fact already powers of the random graph so doing this would only provide us with higher powers of the random graph which are already considered by our argument. This idea however has some merit when combined with a further twist. If we start with a high power of the random graph it will be a much better \lramsey{m}{r} than the random graph for some fixed value of $r$ but perform comparatively poorly for small values of $r.$ If we now scramble this graph a little bit, in the sense that we flip every edge and non-edge with some small probability this will improve the performance of our graph when $r$ is small while only slightly decreasing performance for larger $r$. Taking the lexicographic powers of this graph in place of the random graph is how we obtain our improved construction.

Let us define the $p$-scramble $\G(G,p)$ of a graph $G$ to be the graph obtained by independently removing every edge of $G$ and adding every non-edge of $G$ with probability $p$. The following lemma makes formal the above idea that by taking a $p$-scramble of an \lramsey{m}{r} graph $G$ we obtain a graph which is close to being as good a Ramsey graph as $\G(n,p),$ meaning it has no cliques or independent sets of size about $\log n/p$ or in other words is \lramsey{\log n/p}{2} but is in addition still close to being \lramsey{m}{r}.

\begin{lem}\label{lem:randomize}
If there exists an $n$-vertex \lramsey{m}{r} graph then there exists a graph which is both \lramsey{m}{\frac{r}{17 \log n}} and \lramsey{r/2}{2}, provided $r \ge 16 \log n$. 
\end{lem}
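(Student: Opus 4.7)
The plan is to take $G' = \G(G,p)$ to be the $p$-scramble of the given graph, with $p = 8\log n/r$; the hypothesis $r \ge 16 \log n$ ensures $p \le 1/2$. I will show that with positive (and in fact very high) probability $G'$ satisfies both properties simultaneously.

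For the $(r/2,2)$ property --- which simply says $\omega(G'),\alpha(G') < r/2$ --- the first step is a union bound over all $\binom{n}{r/2}$ candidate subsets. The key observation is that for any fixed $(r/2)$-subset $S$, writing $e,f$ for the numbers of edges and non-edges of $G[S]$, the probabilities that $S$ is a clique, respectively independent, in $G'$ are $(1-p)^e p^f$ and $p^e (1-p)^f$. Since $p \le 1/2$, both are at most $(1-p)^{\binom{r/2}{2}}$. With $p = 8\log n/r$ the quantity $p\binom{r/2}{2}$ exceeds $(r \log n/2)\ln 2$ by a constant factor, so $2\binom{n}{r/2}(1-p)^{\binom{r/2}{2}} = o(1)$ and the first property holds with high probability.

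For the $(m, r/(17\log n))$ property, set $t = r/(17\log n)$. Since $G$ is already $(m,r)$-locally Ramsey, every $m$-subset $S\subseteq V(G)$ contains some $r$-clique $V$ and some $r$-independent set $U$ of $G$, so it suffices to guarantee with positive probability that $\omega(G'[V]) \ge t$ for every $r$-clique $V$ of $G$ and $\alpha(G'[U]) \ge t$ for every $r$-independent set $U$. If $V$ is an $r$-clique of $G$, each edge among $V$ survives independently with probability $1-p$, so the complement of $G'[V]$ is distributed exactly as $\G(r,p)$; finding $K_t$ in $G'[V]$ is equivalent to finding $I_t$ in $\G(r,p)$. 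A Chernoff bound applied to the $\binom{r}{2}$ independent edge indicators of $\G(r,p)$, whose mean is $\binom{r}{2}p \le 4r\log n$, gives
\[
\P\!\left[|E(\G(r,p))| > 8 r \log n\right] \;\le\; e^{-\Omega(r\log n)},
\]
and on the complementary event Tur\'an's inequality yields $\alpha(\G(r,p)) \ge r^2/(16 r \log n + r) \ge r/(17\log n) = t$. The symmetric argument (with $G'[U]$ itself distributed as $\G(r,p)$) handles the $r$-independent sets of $G$. A union bound over the at most $2\binom{n}{r} \le 2^{r\log n + 1}$ relevant $V$ and $U$ is absorbed by the Chernoff failure probability, since the constant in the exponent beats $\ln 2$ with room to spare.

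The main obstacle is simply the simultaneous balancing of $p$: it must be large enough to push $\omega(G'),\alpha(G')$ below $r/2$ via the first union bound, yet small enough that the very dense random graph $\G(r, 1-p)$ still contains $K_t$ across all $\binom{n}{r}$ potentially relevant copies (equivalently, that $\G(r,p)$ remains sparse enough for Tur\'an to give an independent set of size $t$). Both requirements are met by any $p = \Theta(\log n/r)$, and the hypothesis $r \ge 16 \log n$ is exactly what ensures the choice $p = 8\log n/r$ lies below $1/2$, so that $(1-p)^{\binom{r/2}{2}}$ serves as the worst-case estimate in the first union bound.
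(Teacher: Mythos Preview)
Your proof is correct and follows essentially the same approach as the paper: take $G'=\G(G,p)$ with $p=8\log n/r$, use a union bound with the estimate $(1-p)^{\binom{r/2}{2}}$ to show $G'$ is $(r/2,2)$-locally Ramsey, and for the other property use Chernoff plus Tur\'an on each $r$-clique (or $r$-independent set) of $G$ together with a union bound over $\binom{n}{r}$ such sets. The only cosmetic difference is that the paper phrases the first step via a coupling with $\G(n,1-p)$ rather than your direct computation of $(1-p)^e p^f$, but the resulting bound is identical.
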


\begin{proof}

Let $G$ be an \lramsey{m}{r} graph on $n$ vertices. Let $G'\sim \G(G,p)$ with $p:=\frac{8\log n}{r}\le 1/2.$ 

By an immediate coupling, the probability that $G'$ contains a $K_{r/2}$ (or $I_{r/2}$) is at most the probability that $\G(n,1-p)$ contains a $K_{r/2}.$ This probability is, by a union bound, at most $\binom{n}{r/2}(1-p)^{\binom{r/2}{2}}\le 2^{r/2 \cdot \log n-pr^2/8}=n^{-r/2} \le \frac14$ since $\frac r2=\frac{4\log n}{p}$. So $G'$ contains neither $K_{r/2}$ nor $I_{r/2},$ or in other words is \lramsey{r/2}{2} with probability more than $1/2$. 

Given a set of $r$ vertices forming a clique in $G$ the probability that in $G'$ this set  still contains a clique of size $t$ is equal to the probability that $\G(r,1-p)$ has a $K_t$. Note that the expected number of missing edges is $\mu=\binom{r}{2}p=4(r-1)\log n$ so by Chernoff's inequality (see Appendix A of \cite{alon-spencer}) the probability that there are more than $2\mu$ edges is at most $e^{-\mu/3}$. If we have less edges then by Turan's theorem (see \cite{alon-spencer}) there is a clique of size at least $\frac{r^2}{4\mu+r}\ge \frac{r}{16r\log n+r} \ge \frac{r}{17 \log n}$. This means that with probability $1- \binom{n}{r}e^{-\mu/3}\ge 1-n^{r-\frac{4}{3}(r-1)}\ge 1/4$ any clique of size $r$ in $G$ contains a clique of size at least $\frac{r}{17 \log n}$ in $G'$. Repeating for the independent sets we conclude that $G'$ is \lramsey{m}{\frac{r}{17 \log n}} with probability at least $1/2$. Therefore, with positive probability the desired graph exists.
\end{proof}

Being more careful one can improve $\frac{r}{17 \log n}$ to $\Omega\left(\frac{r\log r}{\log n}\right)$ but not more (since $\omega(\G(r,1-p))=\Theta\left(\frac{\log r}{p}\right)$ w.h.p.) However, this improvement seems to be negligible in our applications so we opted for the above simpler argument. The following lemma gives our main construction. We obtain it by starting with our construction from the previous section, scrambling it using \Cref{lem:randomize} then taking an appropriate lexicographic power using \Cref{lem:lex-product} and repeating with this new graph. The parameter $t$ will control the number of iterations that we do. We also, for now, add an assumption that the clique/independent set size $r$ we are looking for is not too small. 

\begin{thm}\label{thm:main-general}
For any $t \ge 2$ there exists a \lramsey{m}{r} graph on $N\ge 4$ vertices, provided $\log m \ge 
t^{2t}(\log r)^{t} (\log N)^{1/t}$ and $\log r \ge t\log \log N$.
\end{thm}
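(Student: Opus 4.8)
The plan is to prove the statement by induction on $t$. For the base case $t=2$ the hypotheses read $\log m\ge 2^{4}(\log r)^{2}\sqrt{\log N}$ and $\log r\ge 2\log\log N$; since $N\ge 4$ forces $\log\log N\ge 1$ and hence $\log r\ge 2$, we get $(\log m)^{2}\ge 2^{8}(\log r)^{4}\log N\ge 2^{9}(\log r)\log N$, i.e.\ $\log r\le(\log m)^{2}/(2^{9}\log N)$, so \Cref{lem:illustration} directly supplies an $N$-vertex \lramsey{m}{r} graph.

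For the inductive step, assume the statement for some $t\ge 2$ and fix $N$ together with admissible $m,r$; we may assume $m\le N$, as otherwise the claim is vacuous. The construction will be $(H')^{\ell}$, with vertices deleted afterwards so that exactly $N$ remain, where $H'=\G(H,p)$ is a $p$-scramble of a graph $H$ produced by the inductive hypothesis on a carefully chosen number of vertices $n$. I would take $\rho$ with $\log\rho=\log r+\log\log n+O(1)$, so that $\rho\ge 35\log n$ and $\rho/(17\log n)\ge r$; then $\mu$ minimal with $\log\mu\ge t^{2t}(\log\rho)^{t}(\log n)^{1/t}$; and finally $n$ with $\log n$ of order $(\log N)^{t/(t+1)}$ up to a correction factor polynomial in $t$ and $\log r$, together with $\ell:=\ceil{\log N/\log n}$ so that $n^{\ell}\ge N$. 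Along the way one must check the side conditions needed to apply the inductive hypothesis to $H$: that $\log\rho\ge t\log\log n$ (which follows from $\log r\ge(t+1)\log\log N\ge t\log\log n$, using $n\le N$) and that $\mu\le n$, so that the \lramsey{\mu}{\rho} graph $H$ is genuinely informative. Then \Cref{lem:randomize} converts $H$ into an $n$-vertex graph $H'$ that is simultaneously \lramsey{\mu}{\rho/(17\log n)} and \lramsey{\rho/2}{2}.

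Next I would apply \Cref{lem:lex-product} to $H'$ with the two breakpoints $(r_{2},m_{2})=(2,\rho/2)$ and $(r_{3},m_{3})=(\rho/(17\log n),\mu)$ and with $m_{4}=n+1$. Since $\log(2m_{2})=\log\rho$, this yields $\log\beta_{H'}(m,\ell)\ge(\log m-\ell\log\rho)/C$, where
\[
  C=\max\left(\log\frac{8\mu}{\rho},\ \frac{\log(16(n+1)/\rho)}{\log(\rho/(17\log n))}\right).
\]
It then remains to verify the single inequality $\log m-\ell\log\rho\ge C\log r$, which says exactly that $(H')^{\ell}$ is \lramsey{m}{r}; deleting vertices down to $N$, which preserves being \lramsey{m}{r}, then closes the induction.

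The step I expect to be the main obstacle is precisely this final verification, as it forces the auxiliary parameters $n,\ell,\mu,\rho$ to meet three competing demands at once: (i) $\ell\log n\ge\log N$ while $\ell\log\rho\approx(\log N/\log n)(\log r+\log\log n)$ stays well below $\log m$, which pushes $\log n$ up to at least of order $(\log N)^{t/(t+1)}/\mathrm{poly}(t,\log r)$; (ii) both branches of $C$ must be at most about $\log m/\log r$, and since the first branch is roughly $\log\mu\approx t^{2t}(\log\rho)^{t}(\log n)^{1/t}$, this caps $\log n$ from above; (iii) the inductive hypothesis's side conditions, notably $\mu\le n$. One must check that the resulting window for $\log n$ is nonempty — this is where the hypotheses $m\le N$ and $\log r\ge(t+1)\log\log N$ are used — and, the genuinely delicate point, that after absorbing the scrambling loss ($\rho$ against $17r\log n$) and the $\ell\log\rho$ correction the leading constant grows from $t^{2t}$ to exactly $(t+1)^{2(t+1)}$ and the exponent of $\log N$ drops from $1/t$ to exactly $1/(t+1)$. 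The remaining estimates are routine.
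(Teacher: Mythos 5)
Your high-level plan matches the paper's proof exactly: induction on $t$, with the base case supplied by \Cref{lem:illustration}, and the inductive step taking a graph from the inductive hypothesis, scrambling it via \Cref{lem:randomize}, and then raising it to a lexicographic power via \Cref{lem:lex-product}. The base case calculation is correct, and the general shape of the parameter choices ($\rho$ slightly larger than $r$ to absorb the scrambling loss, $\ell\log n\approx\log N$, and $\log n$ of order $(\log N)^{t/(t+1)}$ up to $\mathrm{poly}(t,\log r)$) is also the right one.

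However, there is a genuine gap in your application of \Cref{lem:lex-product}: you take $m_4=n+1$, which produces the second branch $C_2=\log(16(n+1)/\rho)/\log(\rho/(17\log n))\approx\log n/\log r$. For the final inequality $\log m-\ell\log\rho\ge C\log r$ you would then need $\log m\gtrsim\log n$. But in the regime that matters, $n$ is \emph{much} larger than $m$: your own constraints force $\log n\gtrsim(\log N)^{t/(t+1)}/\mathrm{poly}(t,\log r)$, while $\log m=(t+1)^{2(t+1)}(\log r)^{t+1}(\log N)^{1/(t+1)}$. For $\log r$ near its floor $(t+1)\log\log N$ and $N$ large, $\log n/\log m\to\infty$, so $C_2\log r\approx\log n\gg\log m$ and the inequality cannot hold. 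Moreover, optimizing $\ell\log\rho+\log n$ over $\log n$ shows the competing demands (a) $\ell\log n\ge\log N$ and (b) $\log m-\ell\log\rho\gtrsim\log n$ already have an empty window when $\log m\ll\sqrt{\log N\cdot\log r}$, which is exactly the situation for $N$ large. The fix is to use the freedom in \Cref{lem:lex-product}: since $m\le n$ in the relevant range, one may and should take $m_{k+1}=m+1$ rather than $n+1$, which replaces $C_2$ by roughly $\log m/\log r$; this is precisely what the paper does, and it makes the final inequality $\log m-\ell\log\rho\ge\max(\log\mu,\,\log m/\log r)\cdot\log r$ achievable. (In the paper this requires checking in passing that $\log\mu\le\log m$, which follows from the same bookkeeping.)
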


\begin{proof}
Let us define $\log m(N,r,t):=t^{2t}(\log r)^{t} (\log N)^{1/t}.$ We will prove by induction on $t$ that for any $N \ge 4$ and $\log r\ge t \log \log N$ there exists an \lramsey{m(N,r,t)}{r} graph on $N$ vertices. The base case of induction for $t=2$ follows (with room to spare) from \Cref{lem:illustration}.

Let us take an \lramsey{m'}{r'} graph $G$ on $n \ge 4$ vertices 
(with parameters $r',n$ satisfying $r' \ge 16 \log n,$ $\log r' \ge t \log \log n$ and $n \ge 4$, to be chosen later) 
given by the inductive assumption for some $t\ge 2$, so with $m':=m(n,r',t)$. Let $G'$ be the scrambled graph given by \Cref{lem:randomize} applied to $G.$ So in particular $G'$ is \lramsey{r'/2}{2} and \lramsey{m'}{r''} where we write $r'':=\ceil{{r'}/{(17 \log n)}},$ note that we are using $r' \ge 16 \log n$ so that the lemma applies.

We now take lexicographic products of $G'.$ \Cref{lem:lex-product} allows us to use these locally Ramsey properties of $G'$ to give bounds on the locally Ramsey properties of $G'^\ell$ which will be our actual example for iteration $t$. 
So in particular we may apply \Cref{lem:lex-product} with $r_2=2,m_2=r'/2 \ge m_{G'}(2); r_3=r'', m_3=m' \ge m_{G'}(r'')$ and $m_4=m+1$. The lemma implies  
\begin{equation}\label{eq:2}\log \beta_{G'}(m,\ell) \ge \frac{\log m-\ell \log r'}{\max\left(\log m', \log m / \log r''\right)},
\end{equation}
since $\log (2m'/r')+2\le \log m'$ (since $r' \ge 8$) and $\log (2(m+1)/r')+3 \le m$ (since $r' \ge 16 \log n \ge 32$). 

We note that at this point what remains to be done is to choose the parameters and use \eqref{eq:2} to show the induction step holds. The rest of the proof is somewhat technical and it might help the reader to at first ignore various constants and floors and ceils.
Let us now choose all our parameters in terms of $N,r$ and $t.$ Let $$m:=m(N,r,t+1),\:\:\:\: \log r':=\left(1+\frac{2}{t}\right)\log r, \:\:\:\: \ell:=\floor{\frac{\log m}{(t+1)\log r'}} \:\: \text{ and } \:\: \log n:=\ceil{\frac{\log N}{\ell}}.$$ Our goal is to show that with this choice of parameters the RHS of \eqref{eq:2} is at least $\log r.$ This would give us a graph on $n^\ell \ge N$ (by definition of $n$) vertices which is \lramsey{m}{r} so we obtain the desired graph by taking a subgraph consisting of exactly $N$ vertices. We first show the following easy inequalities.

\begin{claim*}
We have $16 \le n \le N,$ $\ell \ge 64$ and $\log r' \ge (1+1/t)(\log r+\log (17\log n)).$ 
\end{claim*}
\begin{proof}
Note that since $\log m=\log m(N,r,t+1)= (t+1)^{2(t+1)}(\log r)^{t+1} (\log N)^{1/(t+1)}>(t+1)^{2(t+1)}\log r$ so in particular $\frac{\log m}{(t+1)\log r'}= \frac{\log m}{(t+1)(1+2/t)\log r} \ge (t+1)^{2(t+1)-1}/(1+2/t) > 64,$ which in turn implies $\ell=\floor{\frac{\log m}{(t+1)\log r'}} \ge 64.$ This together with $N \ge 4$ and the definition of $n$ imply $n < N.$ If $m>N$ then there are no subsets of size at least $m$ in $G'^\ell$ so the induction step is vacuously true, therefore we may assume $m \le N$. Using this we get $\log n \ge \frac{\log N}{\ell} \ge \frac{\log N \cdot (t+1)\log r'}{\log m} \ge (t+1)\log r'>3,$ (where we used $r'>1$ and $t \ge 2$). This in particular implies that $\log n \ge 4$ and $\log n \le 4/3 \cdot \log N /\ell \le \log N / 32$ (using $\ell \ge 64$). This in turn implies $\log r' = (1+2/t)\log r\ge (1+1/t)\log r+\log \log N \ge (1+1/t)\log r+ \log (17\log n),$ where we are using $1/t \cdot \log r \ge \log \log N \ge \log (32 \log n).$ 
\end{proof}

This immediately implies the required inequalities on $n,r'$ and $t,$ indeed $ r' \ge 17 \log n$ and $n \ge 4$
while $\log r \ge (t+1)\log \log N$ implies $\log r' \ge \log r \ge (t+1)\log \log N \ge t \log n.$

Let us now turn to the main inequalities. Observe that
\begin{equation}\label{eq:3} \frac{\log m- \ell \log r'}{\log m / \log r''} \ge \left(\frac{\log m-\log m/(t+1)}{\log m}\right)\log r'' \ge \left(1-\frac1{t+1}\right)(\log r'-\log (17\log n)) 
\ge \log r,
\end{equation}
where in the last inequality we used the main inequality from the claim. This shows that one of the two desired inequalities that we need to show to conclude that RHS of \eqref{eq:2} is at least $\log r$ holds. The second inequality we need is equivalent to $\log m- \ell \log r' \ge \log m' \log r$ and is implied by $\log m \ge (1+1/t)\log m' \log r$ (since $\ell \le \log m /((t+1)\log r')$). Let us now show this inequality holds (recall that we have chosen $m'=m(n,r',t)$):
\begin{align*}
 (1+1/t)\log r \log m'  & = (1+1/t)\log r \cdot t^{2t}(\log r')^{t} (\log n)^{1/t}\\   
  & \le (t+1)t^{2t-1}\log r \cdot (\log r')^{t+1/t} \cdot 2 \left(\frac{\log N}{\log m}\right)^{1/t}\\
  &\le (t+1)^{2(t+1)} (\log r)^{t+1+1/t} (\log N)^{1/t}\cdot (\log m)^{-1/t}\\   
  & \le (\log m)^{1+1/t}\cdot (\log m)^{-1/t} =\log m,
\end{align*}
where in the first inequality we used $\log n \le \frac{4\log N}{3 \ell}\le \frac{4(t+1)\log r'\log N}{3 \log m}$ (following since $n \ge 16$ so $\log n \ge 4$ and definitions of $n$ and $\ell$) and $(4(t+1)/3)^{1/t}\le 2$ (since $t\ge 2$). In the second inequality we used $\log r' = (1+2/t)\log r$ and $(1+2/t)^{t+1/t}\le (1+1/t)^{2(t+1/t)}\le (1+1/t)^{2t+1}.$ The third inequality follows as $m=m(N,r,t+1)=(t+1)^{2(t+1)}(\log r)^{t+1}(\log N)^{1/(t+1)}$ since $\left(t+1+1/t\right)\cdot t/(t+1)=t+1/(t+1) \le t+1.$
Together with \eqref{eq:3} this shows that the RHS of \eqref{eq:2} is at least $\log r$ completing the proof.
\end{proof}
We were relatively lax with various estimates in the argument above for the sake of simplifying the inequalities as much as possible. For example, the optimal exponent of $\log r$ (which one may obtain using exact same parameters as we did above) is $(t+1)/2-2/t.$ We also note that the assumption $r \ge (\log n)^t$ was also made for the sake of simplicity, since for smaller values of $r$ the above argument would give barely any improvement over just using the above bound with $r=(\log n)^t$ and monotonicity of $m_G(r)$ in $r$. Let us now optimise over $t$ and obtain \Cref{thm:intro-main-general} as a corollary. Recall the statement of \Cref{thm:intro-main-general}.

\maingeneral*
\begin{proof}
If $\log N \le k \le (\log N)^t$ we use \Cref{thm:main-general}, with $r=(\log N)^t$ to obtain a graph $G$ with $\log m_G(k) \le \log m_G((\log N)^t)\le t^{3t}(\log \log N)^t(\log N)^{1/t}\le t^{3t}(\log k)^t(\log N)^{1/t}.$ If $k>(\log N)^t$ we may use \Cref{thm:main-general} directly with $r=k$ to conclude that for any $k \ge \log N$ there is a graph $G$ with $\log m_G(k) \le  t^{3t}(\log k)^t(\log N)^{1/t}.$ 

We now choose $t=\floor{ \sqrt{\frac{\log \log N}{\log \log k}}}$. If $t < 2$ we obtain that the desired inequality requires $m>N$ making the claim vacuous. Hence the above inequality gives us 
$$\log \log m_G(k) \le 3t \log t+ t\log \log k+\frac{1}{t}\cdot \log \log N \le 4t\log \log k+\frac{1}{t}\cdot \log \log N\le 6\sqrt{ \log \log N \log \log k}$$
where we used $t \le \log \log N \le \log k$ and $t\ge 2.$
\end{proof}

\Cref{thm:intro-log-n} follows by simply plugging in $k= \log N$ in \Cref{thm:intro-main-general}.

\textbf{Remark.} By following the argument used in \Cref{prop:random-graph} it is not hard to show that $\G(n,p)$ is w.h.p.\ \lramsey{(1/p)^{r+8}\log n}{r} for all $r$, assuming $p \le 1/2$. Using this in \Cref{lem:randomize} would give us a graph which is \lramsey{(1/p)^{r+8}\log n}{r} for all $r$ in addition to being \lramsey{m}{\frac1{3p}}, provided $1/p\ge r/(8\log n)$. We can then use this extra information for small values of $r$ similarly as we did in the proof of \Cref{lem:illustration} to obtain an improvement in \eqref{eq:2}. Ultimately, this would lead to an improvement in \Cref{thm:main-general} in which we divide by roughly a $(\log \log N)^{(t-3)/2}$ factor which would only slightly improve the $o(1)$ term in \Cref{thm:intro-log-n}.

\section{Small cliques and independent sets}\label{sec:small-k}
In this section we show \Cref{prop:small-r}. We begin with the lower bound.
\begin{prop}\label{prop-lb}
Provided $n\ge 4r \log n$ and $r\ge 2$ we have $m_n(r)\ge (0.5+o(1))r \log n.$
\end{prop}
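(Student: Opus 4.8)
The plan is to prove the contrapositive of a stronger claim: for every $n$-vertex graph $G$ with $n\ge 4r\log n$, there is an induced subgraph on roughly $\tfrac{r-1}{2}\log n$ vertices that is $K_r$-free or $I_r$-free. Since an $(m,r)$-locally Ramsey graph cannot contain a $K_r$-free (nor an $I_r$-free) induced subgraph on $\ge m$ vertices — any $m$-subset of it would lack $K_r$ (resp. $I_r$) — this shows $m_G(r)\ge(0.5+o(1))r\log n$ for all such $G$, hence the same for $m_n(r)$. Writing $f_r(H)$ for the largest order of a $K_r$-free induced subgraph of $H$, the goal becomes $\max\bigl(f_r(G),f_r(\bar G)\bigr)\ge(0.5+o(1))r\log n$.

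The elementary fact I would use is that a union of $r-1$ independent sets is $K_r$-free (a clique meets each class in at most one vertex). So I would greedily decompose $V(G)$ into independent sets $I_1,I_2,\dots$ by repeatedly removing a maximum independent set; these have non-increasing sizes and $I_1\cup\cdots\cup I_{r-1}$ certifies $f_r(G)\ge\sum_{j=1}^{r-1}|I_j|$. Symmetrically, greedily removing maximum cliques gives $f_r(\bar G)\ge\sum_{j=1}^{r-1}|C_j|$. Now fix a parameter $M$ and suppose both sums are $<M$ (otherwise we already have the desired subgraph). From $\sum_{j<r}|I_j|<M$ and monotonicity we get $|I_{r-1}|<M/(r-1)$ and $\sum_{j\le r-2}|I_j|<M$, so the induced subgraph $R:=G-(I_1\cup\cdots\cup I_{r-2})$ has $|R|>n-M$ and $\alpha(G[R])\le|I_{r-1}|<M/(r-1)$.

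I would then run the clique version of the same step inside $R$ (using $f_r(\overline{G[R]})\le f_r(\bar G)<M$): this yields $R'\subseteq R$ with $|R'|>|R|-M>n-2M$ and $\omega(G[R'])<M/(r-1)$, while $\alpha(G[R'])\le\alpha(G[R])<M/(r-1)$ is inherited. Thus $G[R']$ has more than $n-2M$ vertices and both clique and independence number below $M/(r-1)$, so Erdős–Szekeres, $R(a,a)\le\binom{2a-2}{a-1}<4^{a}$ with $a:=\ceil{M/(r-1)}$, gives $n-2M<4^{a}\le 4^{\,M/(r-1)+1}$. Because the eventual choice is $M\approx\tfrac{r-1}{2}\log n$, the hypothesis $n\ge4r\log n$ forces $2M\le n/2$, so $n-2M\ge n/2$ and we conclude $M>\tfrac{r-1}{2}\bigl(\log n-O(1)\bigr)=(0.5+o(1))r\log n$. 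Taking $M=\floor{\tfrac{r-1}{2}(\log n-3)}$ and reading the implication backwards gives, for every such $G$, a $K_r$-free or $I_r$-free induced subgraph of order $M$, so $m_G(r)>M$ and we are done.

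I expect the delicate points to be purely bookkeeping: verifying that the double vertex loss of $2M$ is negligible (it costs only an additive $O(1)$ inside $\log n$, harmless since $M\ll n$ by hypothesis), and making sure the constant comes out as $\tfrac12$. The latter is exactly where one must remove $r-2$ (not fewer) classes before stopping, so that the leftover class has size at most $\tfrac{1}{r-1}$ of the removed total and the Ramsey exponent is $M/(r-1)$ rather than something like $M/(r\log r)$ that a naive off-diagonal estimate ($R(M,M/(r-1))$) would produce. For $r=2$ the whole argument collapses to the classical $\max(\omega(G),\alpha(G))\ge(0.5-o(1))\log n$.
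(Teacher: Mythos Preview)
Your proof is correct, but it takes a genuinely different route from the paper's. The paper argues directly: apply Erd\H{o}s--Szekeres to $G$ to find a homogeneous set of size $\ge 0.5\log(n/2)$, remove it, and repeat $2r-3$ times (this is where the hypothesis $n\ge 4r\log n$ keeps the graph above $n/2$ vertices throughout). By pigeonhole, at least $r-1$ of these $2r-3$ homogeneous sets are of the same type, and their union is a $K_r$-free or $I_r$-free set of size $(0.5+o(1))r\log n$.

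Your approach is indirect: you greedily peel off maximum independent sets (respectively cliques), assume for contradiction that the first $r-1$ of each kind together have size $<M$, and deduce that a residual graph on $>n-2M$ vertices has both $\alpha$ and $\omega$ below $M/(r-1)$, contradicting Erd\H{o}s--Szekeres applied to the residual. Both arguments hinge on the same Ramsey bound and the same ``union of $r-1$ independent sets is $K_r$-free'' observation, but the paper constructs the bad set in one pass (mixing cliques and independent sets and sorting them out only at the end via pigeonhole), while you run two separate greedy processes and nest them. The paper's argument is a bit shorter and avoids the double-pass bookkeeping; your version has the minor advantage that it isolates the quantity $f_r(G)=\max\{|S|:G[S]\text{ is }K_r\text{-free}\}$ explicitly, which could be convenient if one later wanted an asymmetric statement.
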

\begin{proof}
Let us start with the lower bound. Given an $n$-vertex graph $G,$ by the standard bound on Ramsey numbers (see e.g. \cite{E-S}), $G$ must contain a clique or an independent set of size at least $0.5 \log n$. If we remove this set and repeat $2r - 3$ times we get either $r - 1$ vertex disjoint cliques or $r - 1$ vertex disjoint independent sets of size at least $0.5  \log (n/2)$ as at each step we are left with at least $n-2r\log n \ge n/2$ vertices. The union of these sets give us a set of $(0.5+o(1))r \log n$ vertices in which we can not find a clique (if the sets were independent) or an independent set (if the sets were cliques) of size $r.$ This shows that $m_G(r)\ge (0.5+o(1))r \log n.$
\end{proof}

\textbf{Remark.} The above bound applies for essentially the whole range but is beaten by the approach in \cite{local-independence-numbers} as soon as $r$ is bigger $\log \log n$. They show that provided there is an $I_r$ in every subset of size $s$ then one can find an independent set of size $\Omega(r \log (n/s) /\log (s/r))$. Using this for our graphs and finding $r$ copies of this big independent set as in the proof above would show $m_n(r) \ge \Omega(r^2 \log n /\log \log n),$ provided $r$ is at most polylogarithmic in $n$ which beats the bound in \Cref{prop-lb} when $r\gg \log \log n$.

Let us now turn to the upper bound. Perhaps not too surprisingly since we are working with ``small'' values of $r$ the example is the random graph $G \sim \G(n,1/2)$. Since $n$ is much bigger than $r$ the argument we used to prove \Cref{prop:random-graph}, even when done more carefully, would only give us $m_G(r) \le O(r^4 \log n)$ so we make use of a slightly different approach. 

\begin{prop}
Provided $n$ is sufficiently large compared to $r$ we have for $G\sim \G(n,1/2)$ that w.h.p.\ $m_G(r)= \Theta(r \log n).$
\end{prop}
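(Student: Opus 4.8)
The statement has two halves: the lower bound $m_G(r)=\Omega(r\log n)$ and the matching upper bound $m_G(r)=O(r\log n)$. The lower bound is immediate: \Cref{prop-lb} already gives $m_n(r)\ge (0.5+o(1))r\log n$ for \emph{every} $n$-vertex graph once $n\ge 4r\log n$, and this holds in particular for $G\sim\G(n,1/2)$. So the whole content is the upper bound, for which the plan is to show that w.h.p.\ \emph{any} set $S$ of $m:=Cr\log n$ vertices (for a suitable absolute constant $C$) contains both a clique and an independent set of size $r$.

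\textbf{Upper bound strategy.} Fix such a set $S$ with $|S|=m$. By symmetry of $\G(n,1/2)$ it suffices to bound the probability that $S$ contains no $K_r$ (the $I_r$ case being identical after complementing), and then union bound over all $\binom{n}{m}$ choices of $S$. The naive second-moment/Janson estimate used in \Cref{prop:random-graph} is too lossy here because $m$ is only polynomial in $r$, not exponential. Instead I would use a greedy argument to show that a random graph on $m$ vertices contains $K_r$ with very high probability: reveal a clique vertex by vertex. Having found a clique on $j<r$ vertices inside $S$, the set of common neighbours (within the $m-j$ remaining vertices) has size that is binomially distributed with mean $(m-j)2^{-j}$. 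As long as $j$ is somewhat below $\log m$ this mean is large, so with high probability the clique can be extended. The delicate range is $j$ close to $\log m$; here one needs $m$ large enough compared to $r$ that $\log m \ge r + $ (lower-order terms), i.e.\ $m\ge 2^{r}\cdot(\text{poly})$ would be the honest requirement of a pure greedy argument — which is \emph{too strong} for $m=\Theta(r\log n)$.

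\textbf{The actual obstacle and how to get around it.} So a pure greedy argument does not reach $m=\Theta(r\log n)$; one genuinely needs the union bound over subsets to do work, exploiting that $\binom{n}{m}\le n^m = 2^{m\log n}$ is only moderately large. The right framing: let $f(m)$ be the probability that $\G(m,1/2)$ has no $K_r$. We must show $\binom{n}{m}\cdot 2 f(m)=o(1)$, i.e.\ $f(m)\le n^{-m}/3$ roughly, i.e.\ $-\log f(m)\ge m\log n=Cr\log^2 n$. Now use the standard fact (Erd\H{o}s--Szekeres / supersaturation, or a direct counting argument) that if a graph on $m$ vertices has clique number $<r$ then it has independence number $>m/R(r,r)^{1/1}$-type bound — more usefully: the known concentration result that $\Pr[\omega(\G(m,1/2))< \log m - 2\log\log m]$ is super-exponentially small in $m$. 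Quantitatively, a clean route is Janson's inequality applied not to all $r$-cliques but with the observation that the number of $r$-subsets is $\binom{m}{r}$ and the failure probability is $e^{-\Omega(\mu^2/(\mu+\Delta))}$; when $r\le \log m - 2\log\log m$ one has $\mu = \binom{m}{r}2^{-\binom r2}\ge m^2$ say, and $\Delta = o(\mu^2/m^{2})\cdot\mu$... The point I would verify carefully is that for $m = Cr\log n$ with $C$ a large enough absolute constant and $n$ large compared to $r$, one indeed has $r \le \log m - 2\log\log m$ (since $\log m \ge \log(r\log n) \ge \log\log n + \log r$, and $\log\log n$ dominates $r$ only when... hmm, no).

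\textbf{Correction to the plan — the honest main difficulty.} On reflection, $m=\Theta(r\log n)$ forces $\log m = \log r + \log\log n + O(1)$, which is \emph{not} bigger than $r$ in general (e.g.\ $r$ can be a large constant with $n$ huge, or $r$ polylogarithmic). So a single set $S$ of size $m$ does \emph{not} typically contain $K_r$ — the clique number of $\G(m,1/2)$ is about $2\log m \approx 2\log\log n$, far below $r$. Hence the claim as I first read it cannot be proved by showing every $m$-set contains $K_r$; rather $m_G(r)$ must be interpreted via its definition and the truth is $m_G(r)=\Theta(r\log n)$ only in a regime, and the upper construction must be more clever. Therefore the plan is: partition a random-like structure — take $G$ to be $\G(n,1/2)$ and observe that a set of size $m=Cr\log n$ chosen \emph{adversarially} could be a union of $r-1$ cliques; the proposition claims the random graph \emph{does} achieve $O(r\log n)$, so one shows: w.h.p.\ $\G(n,1/2)$ has the property that every set of $\ge Cr\log n$ vertices spans both a clique and an independent set of size $r$. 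Given the clique-number obstruction, the resolution must be that $r$ is \emph{bounded} relative to $\log m$ in the relevant parameter window ("$n$ sufficiently large compared to $r$" — if $r$ is treated as fixed and $n\to\infty$, then $\log m\sim\log\log n\to\infty > r$, and the greedy argument \emph{does} go through). So: \textbf{the main step} is the greedy/extension argument showing $\Pr[K_r\not\subseteq \G(m,1/2)]\le \exp(-\Omega(m))$ when $\log m\ge 2r$, valid because "$n$ large compared to $r$" makes $\log m\ge \log\log n\gg r$; then the union bound $\binom nm e^{-\Omega(m)}$ closes provided the $\Omega(m)$ beats $m\log n$ — which it does not, so one instead needs the sharper super-exponential bound $\Pr[\omega(\G(m,1/2))<r]\le 2^{-\Theta(m^2/\mathrm{polylog})}$ from Janson when $r\le (1-\eps)\log m$. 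I expect reconciling these quantitative thresholds — pinning down exactly how large $n$ must be relative to $r$ so that both $r\le(1-\eps)\log m$ and the Janson exponent exceeds $m\log n$ — to be the crux of the proof, and the rest (complementation for $I_r$, the two union bounds) to be routine.
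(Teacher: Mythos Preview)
Your overall framework is right: the lower bound is \Cref{prop-lb}, and for the upper bound one must show that
\[
\binom{n}{m}\cdot 2\,\P\bigl(\omega(\G(m,1/2))<r\bigr)=o(1)
\]
for $m=(2+o(1))r\log n$. You also correctly diagnose why the Janson bound from \Cref{prop:random-graph} is not enough: it gives $\P(\omega(\G(m,1/2))<r)\le e^{-\Omega(m^2/r^4)}$, and balancing against $\binom nm\le 2^{m\log n}$ only yields $m=O(r^4\log n)$, exactly as the paper remarks just before the proposition. But your proposal stops there without producing a replacement bound; the ``sharper super-exponential bound $2^{-\Theta(m^2/\mathrm{polylog})}$ from Janson'' you allude to is the same $m^2/r^4$ estimate in disguise, and no amount of tuning the thresholds turns $r^4$ into $r$.

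The missing idea is a structural input, not a refinement of Janson. The paper invokes the Kolaitis--Pr\"omel--Rothschild theorem: for fixed $r$, almost every $K_r$-free graph on $m$ vertices is $(r-1)$-colourable. Hence
\[
\P\bigl(\omega(\G(m,1/2))<r\bigr)\le (1+o(1))\,\P\bigl(\chi(\G(m,1/2))\le r-1\bigr)\le (1+o(1))\,(r-1)^m\,2^{-(1+o(1))m^2/(2r)},
\]
the last step by counting $(r-1)$-colourings and noting that at least $(r-1)\binom{m/(r-1)}{2}\ge (1+o(1))m^2/(2r)$ pairs must be non-edges. This exponent $m^2/(2r)$ is precisely what is needed: plugging $m=(2+o(1))r\log n$ makes $\binom nm\cdot r^m\cdot 2^{-m^2/(2r)}\to 0$. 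The hypothesis ``$n$ sufficiently large compared to $r$'' is exactly what makes the KPR asymptotic valid for the relevant $m$ (and the paper notes the range can be pushed to $r\le O(\log\log n/\log\log\log n)$ via more recent work).
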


\begin{proof}
The lower bound follows from the previous proposition. So we focus on the upper bound.
It is well-known (see \cite{chi-clique}) that almost all graphs without $K_r$ are
$r - 1$ colourable. Hence, provided $m \to \infty $ as $n \to \infty$ we have $\P(\omega(\G(m,1/2))<r)\le (1+o(1))\P(\chi(\G(m,1/2))<r) \le (1+o(1))r^{m}2^{-(1+o(1))m^2/(2r)}.$ Here, the last inequality follows since there are $(r-1)^m$ many ways to assign $r-1$ colours to $m$ vertices and given a colouring there are at least $(r-1)\binom{m/(r-1)}{2}\ge (1+o(1))m^2/(2r)$ pairs of vertices assigned the same colour which are not allowed to appear as edges.
The same estimate holds for the probability that a graph on $m$ vertices contains no independent set of size $r$. Thus in $G$ the expected number of sets of size $m$ which contain no clique of size $r$ or no independent set of size $r$ is at most
$$\binom{n}{m}\cdot 2(1+o(1))r^{m}2^{-(1+o(1))m^2/(2r)}$$
which tends to $0$ for $m = (2 + o(1))r \log n,$ completing the proof.
\end{proof}
The fact that almost all $m$-vertex $K_r$-free graphs are $r-1$ colourable has recently been shown to be true for $r$ up to $\log m/(10 \log \log m)$ in \cite{containers}. Since our sets have size $m$ which is roughly $r \log n$ this means that $n$ sufficiently large in the above result may be replaced with $r \le O(\log \log n/ \log \log \log n).$

\section{Concluding remarks}
In this paper we study the function $m_G(r)$ with particular interest in how small it can be. The function $m_n(r)$ defined as the minimum of $m_G(r)$ over all $n$-vertex graphs $G$ was introduced by Erd\H{o}s and Hajnal almost 30 years ago. Combined with the lower bound obtained in \cite{local-independence-numbers} we obtain
$$\frac{(\log n)^3}{\log \log n}\le m_n(\log n)\le 2^{2^{(\log \log n)^{1/2+o(1)}}}.$$

In general it would be very interesting to get better bounds on $m_n(\log n)$ and in particular answer Erd\H{o}s' question of whether $m_n(\log n)> (\log n)^3$. In fact, the authors suspect that $m_n(\log n)$ may be bigger than any fixed power of $\log n$. Our initial examples in Section \ref{sec:prelim} are essentially the classical examples of explicit Ramsey graphs due to Naor \cite{Naor-lex}, following-up on the idea of using lexicographic products to build Ramsey graphs due to Abbott \cite{abbott-explicit-ramsey}. There has recently been some major progress on finding better explicit Ramsey examples \cite{explicit-ramsey-1,explicit-ramsey-2,explicit-ramsey-3}. It would be interesting to see if one can combine these graphs with our ideas to improve our upper bound.


Another possibly interesting perspective arises if we consider a colouring restatement of our problem. Note that $m_n(r)-1$ may be defined as the largest number $m$ such that in any $2$-colouring of $K_n$ we can find $m$-vertices not containing a monochromatic $K_r$ in one of the colours. With this in mind one can define the $m$-local Ramsey number $LR_m(G)$ of a graph $G$ as the smallest $n$ for which in any $2$-colouring of $K_n$ there are $m$ vertices not containing a monochromatic copy of $G$ in one of the colours. For example if $m$ is sufficiently larger than $r$ \Cref{prop:small-r} implies that $LR_m(K_r)=2^{\Theta(m/r)}$. Natural generalisations to more colours or asymmetric graphs might hold some interest as well.

\vspace{0.2cm}
\textbf{Acknowledgements.} We would like to thank David Conlon for useful conversations and remarks and anonymous referees for their comments. 
\providecommand{\bysame}{\leavevmode\hbox to3em{\hrulefill}\thinspace}
\providecommand{\href}[2]{#2}

\end{document}